\newcommand{\gettikzxy}[3]{%
	\tikz@scan@one@point\pgfutil@firstofone#1\relax
	\edef#2{\the\pgf@x}%
	\edef#3{\the\pgf@y}%
}
\theoremstyle{plain}
\newtheorem{theorem}{Theorem}[section]
\newtheorem{corollary}[theorem]{Corollary}
\newtheorem{proposition}[theorem]{Proposition}
\newtheorem{algorithm}[theorem]{Algorithm}
\theoremstyle{definition}
\newtheorem{definition}[theorem]{Definition}
\newtheorem{remark}[theorem]{Remark}
\newtheorem{example}[theorem]{Example}
\newtheorem{question}[theorem]{Question}
\definecolor{darkblue}{rgb}{0,0,0.7} 
\definecolor{forestgreen}{rgb}{0.07,0.35,0.10} 
\newcommand{\darkblue}{\color{darkblue}} 
\newcommand{\defn}[1]{\emph{\darkblue #1}} 
\newcommandx{\pol}[1][1=P]{\mathsf{#1}} 
\newcommand{\np}[1]{\langle#1\rangle}  
\newcommand{\R}{\mathbb{R}}
\DeclareMathOperator{\aff}{aff} 
\DeclareMathOperator{\conv}{conv} 
\DeclareMathOperator{\cone}{cone} 
\DeclareMathOperator{\ncone}{ncone} 
\newcommand\bA{\bm{A}}
\newcommand\be{\bm{e}}
\newcommand\bi{\bm{i}}
\newcommand\bj{\bm{j}}
\newcommand\bv{\bm{v}}
\newcommand\bw{\bm{w}}
\newcommand\by{\bm{y}}
\newcommand\bs{\bm{s}}
\newcommand\bx{\bm{x}}
\title{Lineup polytopes of product of simplices}
\author[F.~Castillo]{Federico Castillo}
\address[F.~Castillo]{Pontificia Universidad Cat\'olica de Chile, Avenida Vicu\~na Mackenna 4860, Macul, Chile}
\email{efecastillo.math@gmail.com}
\urladdr{\url{https://sites.google.com/view/fcastillo}}
\author[J.-P.~Labbé]{Jean-Philippe Labbé}
\address[J.-P. Labb\'e]{École de Technologie Supérieure, 1111 rue Notre-Dame Ouest, Montréal (Qc) H3C 6M8, Canada}
\email{jean-philippe.labbe@etsmtl.ca}
\urladdr{\url{https://jplab.github.io/}}
\keywords{convex hull problem, symmetric polytopes, quantum marginal problem, normal fans, recursive algorithm}
\subjclass[2020]{Primary 52B12; Secondary 52B55, 90C06, 81P99}
\begin{document}

\begin{abstract}
Consider a real point configuration $\bA$ of size $n$ and an integer $r \leq n$.
The vertices of the $r$-lineup polytope of $\bA$ correspond to the possible orderings of the top $r$ points of the configuration obtained by maximizing a linear functional. 
The motivation behind the study of lineup polytopes comes from the representability problem in quantum chemistry.
In that context, the relevant point configurations are the vertices of hypersimplices and the integer points contained in an inflated regular simplex.
The central problem consists in providing an inequality representation of lineup polytopes as efficiently as possible.
In this article, we adapt the developed techniques to the quantum information theory setup.
The appropriate point configurations become the vertices of products of simplices.
A particular case is that of lineup polytopes of cubes, which form a type $B$ analog of hypersimplices, where the symmetric group of type~$A$ naturally acts.
To obtain the inequalities, we center our attention on the combinatorics and the symmetry of products of simplices to obtain an algorithmic solution.
Along the way, we establish relationships between lineup polytopes of products of simplices with the Gale order, standard Young tableaux, and the Resonance arrangement.
\end{abstract}

\maketitle

\section*{Introduction}

The Farkas--Minkowski--Weyl Theorem establishes a duality principle which is fundamental in discrete geometry: convex polyhedra admit two different equivalent representations \cite[Theorem 7.1]{schrijver_theory_1998}.
Either they represent the set of solutions of a system of linear inequalities ($H$-representation) or they are sums of a linear subspace, a pointed cone, and a polytope ($V$-representation).
Certain problems---for example, asking whether a point belongs to a convex polyhedra, the so-called \emph{membership problem}---are easily solvable if one has access to its $H$-representation, but not if one only has its $V$-representation.
The reverse direction is similarly true, making the translation between representations a task of major importance which is well-known to be computationally expensive \cite{avis_good_1995}.
This problem is sometimes refered to as the \emph{representation conversion problem} or the \emph{convex hull problem}.
The existence of a polynomial time translation algorithm appears to be unlikely as it is NP-complete for unbounded polyhedra \cite{khachiyan_generating_2008}.
It remains an open problem to determine whether there is a translation algorithm that runs polynomially (on the size of input and output) for bounded polyhedra \cite[Open Problem 26.3.4]{toth_handbook_2017}.
By exploiting symmetry of polyhedra, it is possible to obtain more efficient algorithms using the related geometric and combinatorial objects such as fundamental domains and posets.
Indeed, in the present paper, we adapt a $(V\to H)$-translation algorithm introduced in \cite{castillo_effective_2021} and extend its use to another family of symmetric polytopes: lineup polytopes of product of simplices.
Aside from their geometric origin, it turns out that lineup polytopes of product of simplices show relations to quantum information theory \cite{klyachko_quantum_2006}, to the White Whale \cite{deza_sizing_2022}, and to applications of standard Young tableaux in deconvolution in mathematical statistics \cite{mallows_deconvolution_2007,mallows_young_2015}.

\textbf{Quantum marginal problem.} The motivation for extending this algorithm to product of simplices comes from quantum information theory.
Almost 20 years ago, Klyachko used tools from representation theory to study the quantum marginal problem (QMP), details of which are provided in his unpublished manuscript \cite{klyachko_quantum_2004}.
His main contribution is an $H$-representation of the (moment) polytope of all compatible marginals.
Each inequality in the representation has physical significance: it gives a linear constraint on the allowable marginals that are simple to test in practice.
The more general problem of providing an $H$-representation of moment polytopes has been treated by Berenstein--Sjaamar \cite{berenstein_coadjoint_2000}, Ressayre \cite{ressayre_geometric_2010}, and Vergne--Walter \cite{vergne_inequalities_2017}. 
All of these $H$-representations are hard to make effective in practice.
In the article \cite{castillo_effective_2021}, we lay out discrete geometric and combinatorial methods in order to circumvent the complexity of Klyachko's framework by relaxing the problem and computing a larger polytope \emph{while keeping the physically relevant portion} of Klyachko's solution.
The main geometric tool introduced therein are \emph{lineup polytopes}, whose $H$-representations provide necessary linear inequalities that we can effectively compute.

\textbf{Parallel computational tools and symmetry}
There are several translating algorithms between the $V$- and $H$-representations that are already implemented, see \cite{avis_pivoting_1991},  \cite{clarkson_algorithms_1998}, \cite{chand_algorithm_1970}, and  \cite{rote_degenerate_1992} for some examples.
Each algorithm seems to do well on certain classes of polytopes, but none always stand out.
In the present study, we examine a particular family and tailor our methods to that context.
We start with a known normal fan and the goal is to compute a specific refinement of it.
We define lineup fans to serve as intermediate steps, obtained by successive refinements.
The refinement is obtained by adding certain hyperplanes to each full-dimensional cones of the intermediate fans.
This idea is similar to the one behind the \emph{incremental} algorithms which compute convex hulls by adding one hyperplane at the time.
The refinement of a fan naturally lends itself to parallelization; for a recent study on paralellization of general $(V\to H)$-algorithms see  \cite{avis_mplrs_2018}.
Another feature allowing us to speed the computations is the presence of symmetry.
The family of polytopes at play is highly symmetric and we exploit this fact to compute orbit representatives instead of all of them.
Finally, the combinatorics of the problem at hand provide a poset leading to the refinements needed at each step. 

\textbf{White Whale}
Lineup polytopes of hypercubes (i.e. products of line segments) are related to the \emph{resonance arrangement}, see Remark \ref{rem:white_whale}.
This arrangement has the universal property that any rational hyperplane arrangement is the minor of some large enough resonance arrangement \cite{kuhne_universality_2020}.
The corresponding zonotope, known as the White Whale, is the the Minkowski sum of all 0/1 vectors of length $ N $.
There has been interest in computing the number of vertices but even with the latest available method the problem remains ellusive for $N>9$ \cite{deza_sizing_2022}.

\textbf{Realizable Tableaux}
Another case appeared in disguise in earlier work.
The lineup polytope of the product of two simplices $ \Delta_{d-1} \times \Delta_{e-1} $ is related to the number of \emph{realizable} standard Young tableaux (SYT) of rectangular shape $ d \times  e $, as observed by Klyachko in \cite{klyachko_quantum_2004}.
Realizable SYT are also called \emph{outer sums} and they are systematically studied by Mallows and Vanderbei \cite{mallows_young_2015}.
They appear also in recent work of Black and Sanyal \cite{black_flag_2022}.
Contrary to the set of all SYT, which has a close product formula (the hook length formula \cite{greene_probabilistic_1982}) there is no enumeration formula for the realizable case.
Recently Araujo, Black, Burcroff, Gao, Krueger, and McDonough provide some asymptotic results for realiable SYT of rectangular shape in \cite{araujo_realizable_2023} .
Therein, they prove that with $d$ fixed, the number of such tableaux is exponential in $e$ but the base of the exponential is still unknown.
Our computations shed a light on what that base may be.

\textbf{Organization of the paper} 
In Section~\ref{sec:prelim}, we provide the preliminaries on polytopes, their normal fans, lineup polytopes, the connection to the physical motivation and describe important examples.
In Section~\ref{sec:gen_prod}, we develop general results for lineups of product of simplices along with the algorithmic method.
In Section~\ref{sec:hypercubes}, we specialize our tools to the particular case of product of line-segments.
In Section~\ref{sec:further}, we finish with some observations about the original quantum marginal problem and lineup polytopes of cyclic polytopes.

\textbf{Acknowledgements}
The authors thank Alex Black, Jesus De Loera, Julia Liebert, Arnau Padrol, and Christian Schilling for helpful conversations.
In particular, we are thankful to Vic Reiner for pointing out the connection with the poset in Remark~\ref{rem:stan}.
We are grateful to the Simons Center for Geometry and Physics where a part of the work was carried out and to Christian Schilling's group at the Ludwig Maximilian Universit\"at M\"unchen for their hospitality, and providing a fruitful research atmosphere.
FC was partially supported by FONDECYT Grant 1221133.

\section{Preliminaries}
\label{sec:prelim}
We adopt the following conventions: ${d\in\mathbb{N}\setminus\!\{0\}}$, $[d]:=\{1,2,\dots,d\}$.
The cardinality of a set $S$ is denoted by~$|S|$.
Let $\mathbb{R}^d$ be the $d$-dimensional Euclidean space with elementary basis $\{\be_i~:~i\in[d]\}$ and inner product given by $\np{\be_i,\be_j}=\delta_{i,j}$ for $i,j\in[d]$.
Whenever a vector $\mathbf{v}\in\mathbb{R}^d$ is written as a tuple $(v_1,\ldots,v_d)$, the entries are expressing the coefficients of $\mathbf{v}$ in the standard basis, i.e., $\mathbf{v}=\sum v_i \mathbf{e}_i$. 

\subsection{Polytopes and normal fans}

A \defn{polyhedron} is the intersection of finitely many closed halfspaces \cite[Chapter~7]{schrijver_theory_1998}: 
\begin{equation}\label{eq:H-rep}
	\pol[Q] := \left\{ \mathbf{x}\in\mathbb{R}^d: M\mathbf{x} \leq \mathbf{b} \right\},
\end{equation}
where $M$ is a matrix and $\mathbf{b}$ is a vector.
The expression in \eqref{eq:H-rep} is a \defn{$H$-representation} of $\pol[Q]$.
A row of $M$ and its corresponding entry in $\mathbf{b}$ gives a \defn{defining inequality} of $\pol[Q]$, and represents a closed halfspace containing~$\pol[Q]$.
If a row of $M$ is a positive linear combination of other rows of $M$, it is not necessary to define $\pol[Q]$, and this $H$-representation is called \defn{redundant}.

Let $\mathbf{A}=\{\mathbf{v}_1,\dots,\mathbf{v}_n\}\subset\mathbb{R}^d$.
We refer to $\mathbf{A}$ as a \defn{point configuration of size} $n$.
The \defn{affine hull} $\aff(\mathbf{A})$ of~$\mathbf{A}$ is the set of vectors $\sum_{i=1}^n\lambda_i\mathbf{v}_i$ such that $\sum_{i=1}^{n}\lambda_i=1$ and $\lambda_i\in\mathbb{R}^d$.
The \defn{conical} (or ``positive'') \defn{hull} $\cone(\mathbf{A})$ of~$\mathbf{A}$ is the set of vectors $\sum_{i=1}^n \lambda_i\mathbf{v}_i$ such that $\lambda_i\geq 0$, defining a \defn{cone}.
A cone is \defn{pointed} if it contains no lines.
The \defn{convex hull} $\conv(\mathbf{A})$ of~$\mathbf{A}$ is the set of vectors $\sum_{i=1}^n \lambda_i\mathbf{v}_i$ such that ${\sum_{i=1}^{n}\lambda_i=1}$ and $\lambda_i\geq 0$, defining a \defn{polytope}.
The elements in these sets are called \defn{affine}, \defn{conical} and \defn{convex combinations} of~$\mathbf{A}$, respectively.
We refer to the elements of minimal generating sets of affine, conical and convex hulls as \defn{line generators}, \defn{ray generators}, and \defn{vertices}.
Line-generators are unique up to change of affine basis and ray generators are unique up to scaling by positive scalars.

By the Farkas--Minkowski--Weyl theorem, every polyhedron $\pol[Q]$ can be decomposed uniquely as the sum of an affine hull, a conical hull and a convex hull:
\begin{equation}\label{eq:structure}
\pol[Q]=\pol[L]+\pol[K]+\pol,
\end{equation}
where $\pol[L]$ is a linear subspace (called the \defn{lineality space} of $\pol[Q]$),~$\pol[K]$ is a pointed cone (called the \defn{recession cone} of $\pol[Q]$), and $\pol[P]$ is a polytope.
The expression in \eqref{eq:structure} is \defn{$V$-representation} of~$\pol[Q]$.
Thus, polytopes and cones are polyhedra: polytopes are \emph{bounded} polyhedra and cones are \emph{homogeneous} polyhedra that is, $\mathbf{b}=\mathbf{0}$ in \eqref{eq:H-rep}. 

\begin{remark}
Translating between $V$- and $H$-representation of affine or linear subspaces is done quite efficiently through Gauss elimination.
For polytopes or pointed cones, this process is known to be much harder and a central subject in linear optimization and discrete geometry as mentioned in the introduction.
\end{remark}

\subsubsection{Adopted technique: exploiting duality}
\label{ssec:adopted}

We restrict ourselves to the case of polytopes with a high level of symmetry.
In order to pass from a $V$- to a $H$-representation, we use the following method.
Effectively, it turns a $(V\to H)$-translation into a $(H\to V)$-translation which is easier to handle in the special cases of interest.

Let $\pol=\conv(\mathbf{A})$ be a polytope. 
A linear inequality satisfied by all point $\mathbf{x} \in \pol$ is called \defn{valid}.
The \defn{support function} $h_{\pol}:\mathbb{R}^d\to \mathbb{R}$ of $\pol$ is defined as $h_{\pol}(\mathbf{y}):=\max_{\mathbf{x}\in \pol} \np{\mathbf{y},\mathbf{x}}$.
Every vector $\mathbf{y}\in\mathbb{R}^d$ induces a unique valid inequality on a polytope~$\pol$, according to $\np{\mathbf{y},\mathbf{x}}\leq h_{\pol}(\mathbf{y})$.
The polytope $\pol^\mathbf{y} = \{\mathbf{x}\in \pol~:~\np{\mathbf{y},\mathbf{x}}=h_{\pol}(\mathbf{y}) \}$ is refered to as a \defn{face} of~$\pol$.
Vertices of $\pol$ are 0-dimensional faces and \defn{facets} of $\pol$ are codimension-1 faces.
Given a face $\pol[F]$ of $\pol$, we define its \defn{open} and \defn{closed normal cones}:
\begin{align}
\begin{array}{l@{\vspace{1mm}}l}
\ncone_{\pol}(\pol[F])^\circ &:= \{\mathbf{y}\in\mathbb{R}^d~:~\pol^\mathbf{y}=\pol[F]\} \text{ and} \\
\ncone_{\pol}(\pol[F])       &:= \{\mathbf{y}\in\mathbb{R}^d~:~\pol^\mathbf{y}\supseteq \pol[F]\}. \label{eq:ncone_closed}
\end{array}
\end{align}
The collection $\mathcal{N}(\pol):=\{\ncone_{\pol}(\pol[F])~:~\pol[F]\text{ a face of }\pol\}$ is the \defn{normal fan} of $\pol$. 
Here is the keystone of the approach: the normal fan of a polytope is \emph{entirely recovered} from the normal cones of the vertices, since their faces \emph{are} all other cones as the following example illustrates.

\begin{example}[Normal fan of a polygon on the plane]
Let $\pol=\conv\left\{(0,0),(3,0),(3,1),(1,2),(0,2)\right\}$ in $\mathbb{R}^2$ as illustrated in Figure~\ref{fig:normal_fan}.

\begin{figure}[!htbp]
\includegraphics[scale=0.2]{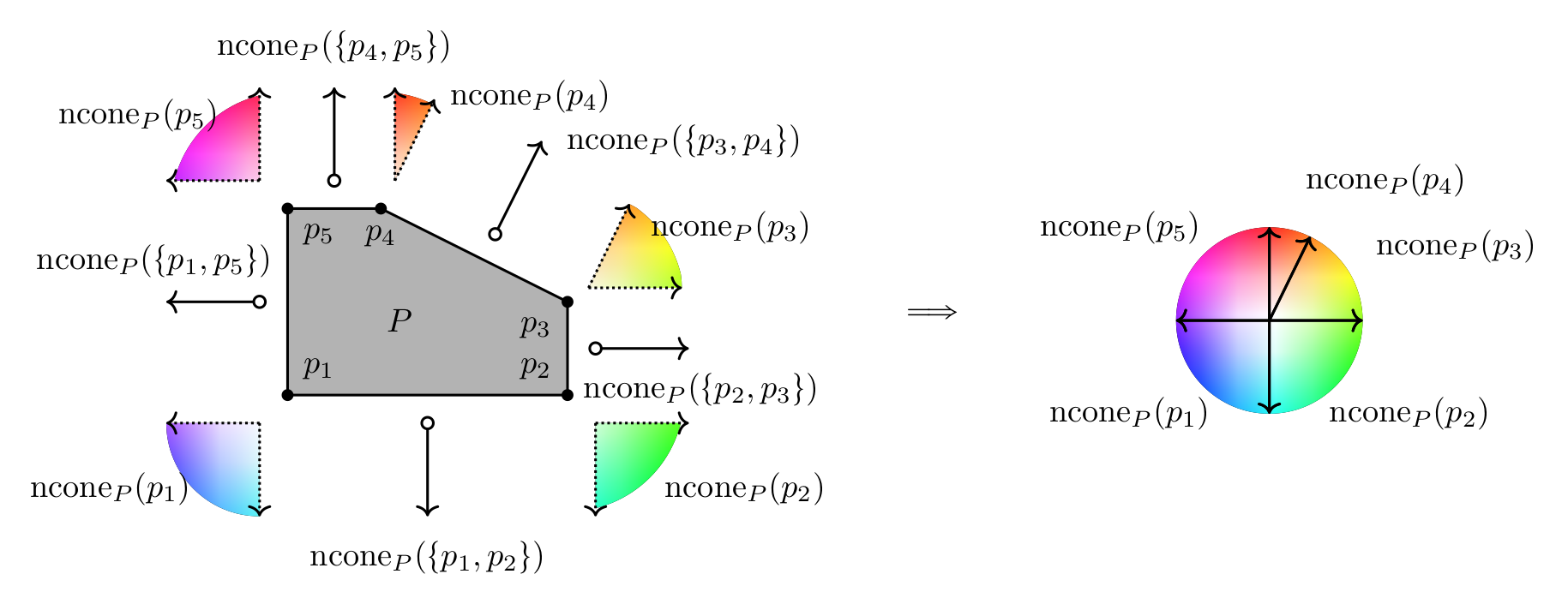}
\caption{A $2$-dimensional polytope and its normal fan obtained by placing the normal cones of the vertices of $\pol$ at the origin.}
\label{fig:normal_fan}
\end{figure}

\end{example}

\noindent
To go from a $V$- to an $H$-representation, one first determines the normal cones of the vertices, then obtain all rays, and finally get a non-redundant $H$-representation:

\begin{compactenum}[i.]
\item The definitions in Equations~\eqref{eq:ncone_closed} say that the normal cone of a face $\pol[F]$ consists of all vectors $\mathbf{y}$ whose linear functional is maximized on $\pol[F]$. 
Whence, for each vertex $\mathbf{v}$ of $\pol$ its normal cone has the following $H$-representation.
\begin{equation}\label{eq:ncone_vertex}
\ncone_{\pol}(\mathbf{v}) = \{\mathbf{y}\in\mathbb{R}^y : \np{\mathbf{y},\mathbf{v}-\mathbf{v}'}\geq0, \text{ for }\mathbf{v}'\in \mathbf{A}\}.
\end{equation}
\item Decompose the normal cone into its lineality space and its recession cone as in \eqref{eq:structure}, i.e., $\ncone_{\pol}(\mathbf{v})=\pol[L]_{\pol}(\mathbf{v})+\pol[K]_{\pol}(\mathbf{v})$ (there is no polytope factor in this case).
The lineality space $\pol[L]_{\pol}(\mathbf{v})$ is the orthogonal complement of $\aff(\pol)$, hence it does not depend on $\mathbf{v}$.
\item Translate the $H$-representation of $\pol[K]_{\pol}(\mathbf{v})$ to a $V$-representation: 
\begin{equation*}
\pol[K]_{\pol}(\mathbf{v}) = \left\{\sum_{i=1}^j \lambda_i\mathbf{r}_i~:~ \lambda_i\geq0, \text{ for }i\in[j]\right\},
\end{equation*}
for some $\mathbf{r}_1,\dots,\mathbf{r}_j\in\mathbb{R}^d$.
\item Let $\mathbf{B}$ be the set of all ray generators $\mathbf{r}_i$'s found in the previous step for all $\ncone_{\pol}(\mathbf{v})$.
For each $\mathbf{r}\in \mathbf{B}$, we determine the value of $h_{\pol}(\mathbf{r})$ by evaluating it on $\mathbf{A}$.
\end{compactenum}
We end up with the non-redundant $H$-representation
\[
\pol=\left\{\mathbf{x}\in\mathbb{R}^d~:~\begin{array}{c}\np{\mathbf{r},\mathbf{x}}\leq h_{\pol}(\mathbf{r}),\text{ for all }\mathbf{r}\in \mathbf{B}\\ \np{\mathbf{y},\mathbf{x}}= h_{\pol}(\mathbf{y}) ,\text{ for all }\mathbf{y}\in \pol[L]_{\pol}\end{array}\right\}.
\]

The high level of symmetry of the studied polytopes confers optimal efficiency to this approach.
Indeed, the exponential number of vertices to consider is reduced to a minimum.
Furthermore, the description in Equation~\eqref{eq:ncone_vertex} is reduced to treating only one linear functional and vertex per orbit.
The main remaining piece is the $(H\to V)$-translation in the third step.

\subsection{Lineup polytopes}
Let $\mathbf{A}=\{\mathbf{v}_1,\dots,\mathbf{v}_n\}\subset\mathbb{R}^d$ and $\by\in\R^d$.
A generic vector~$\by$ provides an injective linear functional $\np{\by,\cdot}:\R^d\to\R$ that totally orders the elements of $\mathbf{A}$ from maximal to minimal value. 
Given such a total order and an integer $r$ such that $1\leq r\leq n$, the sequence $\ell$ of the first $r$ elements in this total order is called a \defn{lineup of length~$r$} of $\mathbf{A}$ \cite[Definition~6.1]{castillo_effective_2021}.
Let $\bw:=(w_1,w_2,\dots,w_r)$ be such that $1> w_1 > w_2 > \cdots > w_r > 0$ and $\sum_{i=1}^r w_i=1$ and furthermore let $\ell=(\bv_1,\bv_2,\dots,\bv_r)$ be an ordered list of~$r$ vectors of $\mathbf{A}$.
We refer to the $w_i$'s as \defn{weights}.
The \defn{occupation vector} associated to $\ell$ with respect to $\bw$ is $\mathbf{o}_{\bw}(\ell):= \sum_{i=1}^{r}w_i\bv_i$.
The \defn{$r$-lineup polytope} of $\mathbf{A}$ is 
\[
\pol[L]_{r,\bw}(\mathbf{A}):= \conv\left\{\mathbf{o}_{\bw}(\ell)~:~\ell \text{ is an ordered $r$-subset of }\mathbf{A}\right\},
\]
see \cite[Definition 6.1]{castillo_effective_2021}.
To a polytope $\pol\subset\R^d$, we associate the point configuration $\mathbf{A}$ given by listing its vertices in some order.
The next proposition summarizes the content of \cite[Theorem E]{castillo_effective_2021}; part (3) is a slight generalization of \cite[Proposition 6.18]{castillo_effective_2021}.

\begin{proposition}\label{prop:main_structure}
Let $\mathbf{A}=\{\mathbf{v}_1,\dots,\mathbf{v}_n\}\subset\mathbb{R}^d$ and $1\leq r\leq n$.
If $\bw\in\R^r$ has strictly decreasing coordinates, then the following are statements hold:
\begin{enumerate}
\item The point $\mathbf{o}_{\bw}(\ell)$ is a vertex of the lineup polytope $\pol[L]_{r,\bw}(\mathbf{A})$ if and only if $\ell$ is an $r$-lineup.
\item If $\bv^\ell$ is a vertex of $\pol[L]_{r,\bw}$ with lineup $\ell=(\bv_1,\dots,\bv_r)$, then the open normal cone of $\bv^\ell$ is the set of $\by\in\R^d$ such that
\begin{align*}
& \np{\by,\bv_1}>\np{\by,\bv_2}>\cdots>\np{\by,\bv_r} \text{ and}\\
& \np{\by,\bv_r}>\np{\by,\bv}, \text{	for every point $\bv$ of $\mathbf{A}$ not contained in $\ell$.}
\end{align*}
The normal cone of $\bv^\ell$ is called the \defn{lineup cone} of $\ell$.
\item If $\bs(\by,\mathbf{A})$ denotes the largest $r$ values of $\np{\by,\mathbf{A}}$ ordered decreasingly, then
\begin{equation*}
\max_{\bx\in\pol[L]_{r,\bw}(\mathbf{A})} \,\np{\by,\bx}= \np{\bw,\bs(\by,\mathbf{A})}.
\end{equation*}
\end{enumerate}
\end{proposition}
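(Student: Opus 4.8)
The plan is to reduce parts (1)--(3) to a single rearrangement-inequality computation for the linear functional $\np{\by,\cdot}$ evaluated on the candidate points $\mathbf{o}_{\bw}(\ell')$, where $\ell'$ ranges over ordered $r$-subsets of $\mathbf{A}$. Writing $\ell'=(\bv'_1,\dots,\bv'_r)$, one has $\np{\by,\mathbf{o}_{\bw}(\ell')}=\sum_{i=1}^{r}w_i\np{\by,\bv'_i}$, so everything is governed by how the strictly decreasing, positive weights $w_i$ interact with the ordering of the numbers $\np{\by,\bv}$, $\bv\in\mathbf{A}$.

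The core claim is that
\[
\np{\by,\mathbf{o}_{\bw}(\ell')}\ \le\ \np{\bw,\bs(\by,\mathbf{A})}\qquad\text{for every }\by\in\R^d\text{ and every }\ell',
\]
with equality attained. I would prove it with two moves. \emph{Sorting:} if $\np{\by,\bv'_i}<\np{\by,\bv'_{i+1}}$ for adjacent indices, transposing $\bv'_i,\bv'_{i+1}$ changes the objective by $(w_i-w_{i+1})\bigl(\np{\by,\bv'_{i+1}}-\np{\by,\bv'_i}\bigr)\ge 0$ since $\bw$ is strictly decreasing; iterating (bubble sort) makes $\ell'$ nonincreasing in $\by$-value without decreasing the objective, bounding it by $\sum_i w_i\sigma_i$, where $\sigma_1\ge\cdots\ge\sigma_r$ are the $\by$-values of the set underlying $\ell'$, sorted. \emph{Selection:} since the $w_i$ are positive and the $i$-th largest value over any $r$-subset is at most the $i$-th largest value over all of $\mathbf{A}$, we get $\sum_i w_i\sigma_i\le\np{\bw,\bs(\by,\mathbf{A})}$; equality holds by taking the underlying set to realize the top $r$ values of $\np{\by,\mathbf{A}}$, listed nonincreasingly. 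This is part (3). For later use I would record the generic refinement: when the numbers $\np{\by,\bv}$ are pairwise distinct, both inequalities are strict unless the set underlying $\ell'$ is exactly the set of top $r$ points and $\ell'$ lists them in strictly decreasing $\by$-order, so for generic $\by$ the maximizer of $\np{\by,\mathbf{o}_{\bw}(\cdot)}$ is unique and equals the $r$-lineup determined by $\by$.

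For part (1): if $\ell$ is an $r$-lineup, choose a generic $\by$ realizing it; by the refinement $\mathbf{o}_{\bw}(\ell)$ is the unique maximizer of $\np{\by,\cdot}$ over $\pol[L]_{r,\bw}(\mathbf{A})$, hence a vertex (a unique linear maximizer over a polytope lies in any generating set and is a vertex). Conversely, if $\mathbf{o}_{\bw}(\ell)$ is a vertex, its open normal cone is a finite intersection of open half-spaces, hence open, and nonempty, so it contains a generic $\by$; then $\mathbf{o}_{\bw}(\ell)$ is the unique maximizer, so by the refinement $\mathbf{o}_{\bw}(\ell)=\mathbf{o}_{\bw}(\ell^{*})$ for the $r$-lineup $\ell^{*}$ of $\by$, and uniqueness of the maximizing sequence forces $\ell=\ell^{*}$. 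For part (2): with $\bv^\ell=\mathbf{o}_{\bw}(\ell)$ a vertex and $\ell=(\bv_1,\dots,\bv_r)$ its $r$-lineup, by \eqref{eq:ncone_closed} the open normal cone equals $\{\by:\mathbf{o}_{\bw}(\ell)\text{ is the unique maximizer of }\np{\by,\cdot}\}$. If $\by$ satisfies the displayed strict chain, the selection step forces the underlying set to be $\{\bv_1,\dots,\bv_r\}$ and the sorting step forces the order $(\bv_1,\dots,\bv_r)$, so $\mathbf{o}_{\bw}(\ell)$ is the unique maximizer, giving one inclusion. For the reverse, if $\np{\by,\bv_i}\le\np{\by,\bv_{i+1}}$ for some $i$, transposing gives $\ell'$ with $\np{\by,\mathbf{o}_{\bw}(\ell')}\ge\np{\by,\mathbf{o}_{\bw}(\ell)}$: either strictly (contradicting maximality) or with equality and $\np{\by,\bv_i}=\np{\by,\bv_{i+1}}$, whence $\mathbf{o}_{\bw}(\ell')\ne\mathbf{o}_{\bw}(\ell)$ is a second maximizer (contradicting uniqueness); similarly $\np{\by,\bv_r}\le\np{\by,\bv}$ for some $\bv\notin\ell$ is excluded by replacing $\bv_r$ with $\bv$. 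Hence the strict chain holds.

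The step I expect to demand the most care is the bookkeeping for non-generic $\by$: the two rearrangement moves give only weak inequalities when $\by$-values coincide, so one must carefully separate ``$\mathbf{o}_{\bw}(\ell)$ is not a maximizer'' from ``$\mathbf{o}_{\bw}(\ell)$ is a non-unique maximizer'', and one must confirm that the open normal cone of a vertex of the (possibly non-full-dimensional) lineup polytope is still a genuinely open, hence generic-point-containing, subset of $\R^d$. Beyond that, everything rests on the classical rearrangement inequality and the elementary fact that a unique linear maximizer over a polytope is a vertex.
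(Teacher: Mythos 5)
Your proposal is correct. Note that the paper itself gives no proof of this proposition: it is stated as a summary of Theorem~E and Proposition~6.18 of \cite{castillo_effective_2021}, so there is no in-text argument to compare against. Your self-contained proof --- the rearrangement (bubble-sort) inequality combined with the selection bound to establish part (3), a generic-direction refinement to get part (1), and the exchange/transposition analysis with the $w_i\neq w_{i+1}$, $\bv_i\neq\bv_{i+1}$ observation to rule out non-unique maximizers in part (2) --- is the standard route and is essentially the argument underlying the cited result. The handling of the non-generic cases is the right place to be careful, and you resolve it correctly: a weak-inequality exchange that preserves the objective still produces a \emph{distinct} maximizing point because the weights are strictly decreasing and the points of $\mathbf{A}$ are distinct, which contradicts membership in the \emph{open} normal cone. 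One small point worth making explicit: the selection step uses $w_i>0$, which is not literally implied by the proposition's hypothesis ``strictly decreasing coordinates'' but is part of the paper's standing definition of a weight vector ($1>w_1>\cdots>w_r>0$); you correctly invoke positivity, and without it part (3) would be false.
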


Lineup cones are independent of the specific values of the entries of $\bw$ as long as they are strictly decreasing.
Therefore, the normal fan $\Sigma^r(\mathbf{A})$ is independent of the specific choice of $\bw$ and we call it the \defn{lineup fan} of~$\mathbf{A}$.
For this reason, we omit the symbol $\bw$ in our notation.
Also when $r=n$ we omit the symbol~$r$ and call $\pol[L]_{n,\bw}(\mathbf{A})=\pol[L](\mathbf{A})$ the \defn{sweep polytope} of $\mathbf{A}$.
These polytopes were studied by Padrol and Philippe, see \cite{padrol_sweeps_2021}.
Sweep polytopes are zonotopes: 
\begin{equation*}
\pol[L](\mathbf{A}) = \sum_{\bv_1,\bv_2\in\mathbf{A}} \text{conv}(\bv_{1}, \bv_{2}).
\end{equation*}
Frequently, the $H$-representations of these zonotopes are difficult to obtain.
However, the general definition with $r<|\mathbf{A}|=n$ allows us to partially compute the normal fan of the sweep polytope using recursion.
Indeed, we have
\begin{equation}
	\label{eq:refinement}
	\mathcal{N}(\text{conv}(\mathbf{A}))=\Sigma^1(\mathbf{A})\succeq\Sigma^2(\mathbf{A})\succeq\dots\succeq\Sigma^n(\mathbf{A})=\mathcal{N}(\pol[L](\mathbf{A})),
\end{equation}
where $\Sigma\succeq\Sigma'$ denotes that $\Sigma'$ is a refinement of $\Sigma$.

\begin{remark}\label{rem:partial}
Equation \eqref{eq:refinement} delivers partial information of the whole sweep polytope without fully computing it.
This feature is interesting on its own right as it is possible to obtain relevant information (a non-redundant and partial facet-defining $H$-representation) for a potentially large polytope without having to wait for the translation algorithm to complete. 
\end{remark}

\subsection{Connections with physics}
\label{ssec:conn_phys}
We describe the physical context related to the study of lineup polytopes of products of simplices.
We start with the simplest instance of QMP (for a general account of QMP see \cite{schilling_quantum_2015}).
Let $\mathcal{H}_{1},\mathcal{H}_{2}$ be two finite dimensional Hilbert spaces. 
There is a linear map called the \defn{partial trace} between $ \text{End}\left(  \mathcal{H}_{1}\otimes\mathcal{H}_{2} \right) $ and $ \text{End}\left(  \mathcal{H}_{1} \right)$ uniquely determined by mapping $ \rho_{1}\otimes\rho_{2} \mapsto \text{trace}(\rho_{2})\rho_{1} $  whenever $\rho_1,\rho_2$ are linear operators on $ \mathcal{H}_{1} $ and $ \mathcal{H}_{2} $ respectively.
Using the partial trace, we associate to an operator $ \rho \in   \text{End}\left(  \mathcal{H}_{1}\otimes\mathcal{H}_{2} \right) $ its \defn{marginals} $ \rho^{(1)} $ and $ \rho^{(2)} $.
Furthermore, if we assume that $ \rho $ is a density operator, (that is, if all its eigenvalues are real, in the interval $ [0,1] $, and they add up to 1), then its two marginals $ \rho^{(i)} $ are density operators too.
In this context, the quantum marginal problem is to determine the triples $ (\bm{w}, \bm{w}^{(1)}, \bm{w}^{(2)}) $ such that there exists an operator $ \rho $ such that $ \bm{w} = \text{spec}(\rho) $ and $ \bm{w}^{(i)} = \text{spec}(\rho^{(i)}),$ for $i = 1,2$.

The setup naturally generalizes to the tensor product of $ N \geq 2 $ Hilbert spaces each of dimension $ d $: we seek to relate the spectrum of a density operator on the tensor space with the spectra of its $ N $ marginals.
This space parametrizes the states of a system of $N$ distinguishable particles, called qu\emph{d}its if we want to refer to the dimension $d$.
We shall give special attention to the case $ d=2 $ because it corresponds to systems of qubits relevant in quantum information.
This instance of QMP was solved by Klyachko and Altunbulak, see \cite{klyachko_quantum_2006} and \cite{altunbulak_pauli_2008}.
For any vector $ \bm{x} \in \mathbb{R}^{d} $ we define $ \bm{x}^{\downarrow} $ as the vector consisting of the absolute values of the entries of $ \bm{x} $ in weakly decreasing order.
The set
\begin{equation}\label{eq:klypo}
\Lambda(d,N,\bm{w}):= \left\{ \left(\bm{v}^{1},\dots,\bm{v}^{N} \right) \in \prod_{i=1}^N \mathbb{R}^{d} ~\middle|~ \exists\ \rho \text{ with } \text{spec}(\rho)=\bm{w} \ \text{ and }\ \text{spec}\left(\rho^{(i)}\right)^{\downarrow}=\bm{v}^{i}, \text{ for all } i \in [N] \right\} 
\end{equation}
\noindent
of spectra of the marginals arising from operators with a fixed spectrum $ \bm{w}^{\downarrow}  $ is a polytope.
Polytopality is a consequence of general results about moment polytopes \cite{kirwan_convexity_1984}.
Klyachko described a finite set of defining inequalities in \cite[Theorem 4.2.1]{klyachko_quantum_2006} and \cite[Example 2]{altunbulak_pauli_2008}.
His solution has two steps:
\begin{enumerate}
\item (Discrete Geometry) Find all \textit{edges} of certain polyhedral cones called \textit{cubicles}.
\item (Schubert Calculus) For each \textit{edge} consider all permutations satisfying certain cohomological condition that can be phrased in terms of Schubert polynomials.
\end{enumerate}

Each pair (edge,permutation) produces a defining inequality for the polytope $\Lambda(d,N,\bm{w})$ \cite[Equation~(13)]{altunbulak_pauli_2008}.
Both steps are theoretical triumphs, however, in practice they retain a high computational complexity.
Our motivation for the present paper is to efficiently compute the first part of the solution.

The polytope $\Lambda(d,N,\bm{w})$ is not a lineup polytope, however it is closely related to the $ \pol[L](\Pi_{d,N}) $, the lineup polytope of the product of $ N $ simplices of dimension $ d-1 $.
There is a region which we call the \emph{test cone} (Equation \eqref{eq:testCone}) for which the support function of both polytopes agree.
This means that we can get some of the defining inequalities of $\Lambda(d,N,\bm{w})$ by means of computing $ \pol[L](\Pi_{d,N}) $.

\subsection{Examples}

\begin{example}[{$[n,m]$-grid}]\label{ex:grid}
Let $\mathbf{A}=\{1,2,\dots,n\}\times\{1,2,\dots,m\}\subset\R^2$ and set $r=mn$.
The number of $r$-lineups, or sweeps, is equal to the number of vertices of the sweep polytope of~$\mathbf{A}$.
Since this is a polygon, the number of vertices is equal to the number of edges and this corresponds to uncoarsenable rankings.
In this context, a ranking is uncoarsenable as long as the corresponding functional puts two points in a tie.
By symmetry, we can assume without loss of generality that $(1,1)$ comes first in the ranking.
So we must count the number of line segments with one endpoint in $\{(1,1),\dots,(n,1)\}$, the other in $\{(1,1),\dots,(1,m)\}$, up to parallel translations.
By counting the slopes in lowest fractional terms, we find that the number of parallel classes is
\[
\sum_{i=1}^n \phi(i,m) + 2 = \sum_{i=1}^m \phi(i,n) + 2,
\]
where $\phi(a,b)=|\{k\in[b]~:~\,\text{gcd}(k,a)=1\}|$.
The $+2$ comes from the segments parallel to the axes.
Considering symmetry, we have
\[
4\left(\sum_{i=1}^n \phi(i,m) \right)+2
\]
uncoarsenable rankings, and thus also the same number of sweeps.
For example, when $ (m, n) = (4, 3) $, the sweep polytope is the convex hull of $12!=479\ 001\ 600$ points. 
However, we obtain only 38 sweeps, so that the resulting sweep polygon has 38 vertices, and thus 38 edges.

\begin{figure}[!htbp]
\begin{tikzpicture}[scale=0.8]
\foreach \x in {0,...,3}
\foreach \y in {0,...,2} 
{
\draw (\x,0)--(0,\y);
}
\draw[red,line width=2pt] (1,0)--(0,1) (2,0)--(0,2);
\foreach \x in {0,...,3}
\foreach \y in {0,...,2} 
{
\filldraw (\x,\y) circle (1pt);
}
\end{tikzpicture}
\caption{8 line segments between the points in both axis but two of them are parallel so they correspond to the same ranking.}
\end{figure}
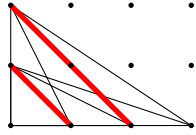
\end{example}

\begin{example}
Let $\pol$ be the prism over the two-dimensional triangle $ \mathsf{T} = \conv\{(0,0,0),(0,1,0),(0,0,1)\}$ and $\bw=(6,5,4,3,2,1)/21$.
The lineup polytope $\pol[L]_{6,\bw}(\pol)$ of $\pol$ is depicted in Figure~\ref{fig:lineup_prism}. 
\begin{figure}[!htbp]
\input{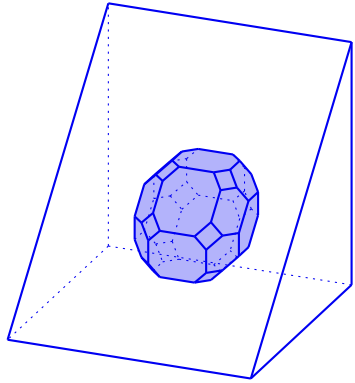}
\caption{The lineup polytope of the prism over a triangle with weights $(6,5,4,3,2,1)/21$.}
\label{fig:lineup_prism}
\end{figure}
\end{example}

\begin{example}[Standard simplex]\label{ex:simplex}
Let $ \mathbf{A} =  \{ \bm{e}_{1}, \dots, \bm{e}_{d} \}  $ be the canonical basis of $ \mathbb{R}^{d} $.
The convex hull $ \conv(A) $ is the \defn{standard simplex} of dimension $ (d-1) $ and it is denoted by $ \Delta_{d-1} $.
The sweep polytope $ \pol[L]_{d,\bw}(\mathbf{A}) $ in this case is equal to $ \text{conv} \{ w_{\pi(1)}, w_{\pi(2)}, \dots, w_{\pi(d)}| \pi \in \mathfrak{S}_d\} $, known as a \textit{permutohedron}, see \cite{postnikov_permutohedra_2009}.
\end{example}

\begin{example}[{Product of two segments, or the $[-1,1]$-square}]\label{ex:2qb}
The polytope $\Delta_1 \times \Delta_1$ is a square in $\R^2\times\R^2=\R^4$ with vertices \[\{(1,0;1,0),(1,0;0,1),(0,1;1,0),(0,1;0,1)\}.\]
To lower the dimension of the ambient space we map 
\[
	\gamma:\R^2\times\R^2\to\R^2,\qquad (x_{11},x_{12};x_{21},x_{22})\mapsto (x_{11}-x_{12} , x_{21}-x_{22}).
\]
Under this projection the polytope $\Delta_1 \times \Delta_1$ maps to the square $[-1,1]\times[-1,1]$ in $ \mathbb{R}^{2} $.
The 24 occupation vectors are illustrated in Figure \ref{fig:2qubits} along with their convex hull.
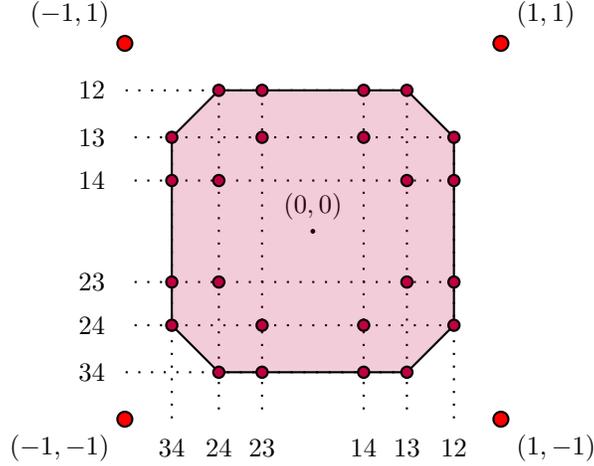
\begin{figure}[!htbp]
\begin{tikzpicture}
	[scale=2.5,
	vertex/.style={inner sep=1.5pt,circle,draw=black,fill=purple,thick},
	vertex_carre/.style={inner sep=2pt,circle,draw=black,fill=red,thick},
	axis/.style={loosely dotted, thick},
	poly/.style={draw=black,thick,fill=purple,fill opacity=0.2},
	baseline=(center)]
	
	\node[inner sep=0.5pt,circle,draw=black,fill=black,label=above:{$(0,0)$}] (center) at (0,0) {};

	\def\wun{0.63}
	\def\wdeux{49/200}
	\def\wtrois{0.12}
	\def\wquatre{0.005}
	
	\coordinate (1312) at (\wun-\wdeux+\wtrois-\wquatre,\wun+\wdeux-\wtrois-\wquatre) {};
	\coordinate (1213) at (\wun+\wdeux-\wtrois-\wquatre,\wun-\wdeux+\wtrois-\wquatre) {};
	\coordinate (1412) at (\wun-\wdeux-\wtrois+\wquatre,\wun+\wdeux-\wtrois-\wquatre) {};
	\coordinate (1214) at (\wun+\wdeux-\wtrois-\wquatre,\wun-\wdeux-\wtrois+\wquatre) {};
	\coordinate (1413) at (\wun-\wdeux-\wtrois+\wquatre,\wun-\wdeux+\wtrois-\wquatre) {}; 
	\coordinate (1314) at (\wun-\wdeux+\wtrois-\wquatre,\wun-\wdeux-\wtrois+\wquatre) {};
	
	\draw[axis] (1312) -- (\wun-\wdeux+\wtrois-\wquatre,-1) node[label=below:{$13$}] {};
	\draw[axis] (1213) -- (\wun+\wdeux-\wtrois-\wquatre,-1) node[label=below:{$12$}] {};
	\draw[axis] (1412) -- (\wun-\wdeux-\wtrois+\wquatre,-1) node[label=below:{$14$}] {};
	
	\draw[axis] (1312) -- (-1,\wun+\wdeux-\wtrois-\wquatre) node[label=left:{$12$}] {};
	\draw[axis] (1213) -- (-1,\wun-\wdeux+\wtrois-\wquatre) node[label=left:{$13$}] {};
	\draw[axis] (1214) -- (-1,\wun-\wdeux-\wtrois+\wquatre) node[label=left:{$14$}] {};
	
	\gettikzxy{(1312)}{\x}{\y}
	\draw[axis] (-1*\x,\y) -- (-1*\x,-1) node[label=below:{$24$}] {};
	\gettikzxy{(1213)}{\x}{\y}
	\draw[axis] (-1*\x,\y) -- (-1*\x,-1) node[label=below:{$34$}] {};
	\gettikzxy{(1412)}{\x}{\y}
	\draw[axis] (-1*\x,\y) -- (-1*\x,-1) node[label=below:{$23$}] {};
	
	\gettikzxy{(1312)}{\x}{\y}
	\draw[axis] (\x,-1*\y) -- (-1,-1*\y) node[label=left:{$34$}] {};
	\gettikzxy{(1213)}{\x}{\y}         
	\draw[axis] (\x,-1*\y) -- (-1,-1*\y) node[label=left:{$24$}] {};
	\gettikzxy{(1214)}{\x}{\y}         
	\draw[axis] (\x,-1*\y) -- (-1,-1*\y) node[label=left:{$23$}] {};

	\gettikzxy{(1312)}{\a}{\b}         
	\gettikzxy{(1213)}{\c}{\d}         
	\filldraw[poly] (1312) -- (1213) -- (\c,-\d) -- (\a,-\b) -- (-\a,-\b) -- (-\c,-\d) -- (-\c,\d) -- (-\a,\b) -- cycle;
	
	\foreach \a/\b in {1/1,1/-1,-1/1,-1/-1}{%
		
		\ifthenelse{\b=1}{\ifthenelse{\a=-1}{\node[vertex_carre,label=above left:{$(\a,\b)$}] at (\a,\b) {};}{\node[vertex_carre,label=above right:{$(\a,\b)$}] at (\a,\b) {};}}
		{\ifthenelse{\a=-1}{\node[vertex_carre,label=below left:{$(\a,\b)$}] at (\a,\b) {};}{\node[vertex_carre,label=below right:{$(\a,\b)$}] at (\a,\b) {};}}
		\gettikzxy{(1312)}{\x}{\y}
		\node[vertex] at (\a*\x,\b*\y) {};
		\gettikzxy{(1213)}{\x}{\y}
		\node[vertex] at (\a*\x,\b*\y) {};
		\gettikzxy{(1412)}{\x}{\y}
		\node[vertex] at (\a*\x,\b*\y) {};
		\gettikzxy{(1214)}{\x}{\y}
		\node[vertex] at (\a*\x,\b*\y) {};
		\gettikzxy{(1413)}{\x}{\y}
		\node[vertex] at (\a*\x,\b*\y) {};
		\gettikzxy{(1314)}{\x}{\y}
		\node[vertex] at (\a*\x,\b*\y) {};
	}
\end{tikzpicture}
\caption{The 24 possible points illustrated. 
The pair of numbers along the axis are shortcuts giving the indices of the $w$'s that are positive, while the others are negative. 
For instance the pair $14$ represents the number $w_1-w_2-w_3+w_4$.}
\label{fig:2qubits}
\end{figure}

The convex hull is the polyhedron consisting of all points $(x_1,x_2)\in\R^2$ satisfying the following linear inequalities:
\begin{equation}\label{eq:2bits_ineqs}
\begin{array}{rll}
-w_1-w_2+w_3+w_4\leq&x_1\phantom{+x_2}&\leq w_1+w_2-w_3-w_4,\\
-w_1-w_2+w_3+w_4\leq& \phantom{x_1+}\:\:x_2&\leq w_1+w_2-w_3-w_4,\\
-2w_1+2w_4\leq&x_1+x_2&\leq2w_1-2w_4,\\
-2w_1+2w_4\leq&x_1-x_2&\leq2w_1-2w_4.
\end{array}
\end{equation}

\end{example}

Recall from Section~\ref{ssec:conn_phys}, that for any vector $ \bm{x} \in \mathbb{R}^{d}$, we define $\bm{x}^{\downarrow}$ as the vector consisting of the absolute values of the entries of $\bm{x}$ in weakly decreasing order.
Using that notation, the $H$-representation of the sweep polytope $ \Sigma_{2,2} $ of the square $[-1,1]\times[-1,1]$ given in Equation~\eqref{eq:2bits_ineqs} can be rewritten as
\begin{equation*}
\Sigma_{2,2}=\left\{\bx\in\R^2:\quad
\left(
\begin{array}{rr}
1 & 0 \\
1 & 1 \\
\end{array}
\right)\bx^{\downarrow} \leq
\left(
\begin{array}{rrrr}
1 & 1 & -1 & -1\\
2 &  0 & 0 & -2\\
\end{array}
\right)\bw
\right\}.
\end{equation*}

\begin{example}[{Product of three segments, or the $[-1,1]$-cube}]\label{ex:3qb}
Let us now consider the sweep polytope of $(\Delta_1)^3$, the product of three line segments.
As in the previous example we map $(\Delta_1)^3$ to $\R^3$ by taking the difference on each factor.
The image is the cube $[-1,1]^3 \subset \mathbb{R}^{3} $.
This cube has 8 vertices, so if one computes the convex hull, $8!=40320$ points needs to be considered of which only $96$ forms the convex hull, see Figure~\ref{fig:3qubits}.
Algorithms~\ref{algo:test} and~\ref{algo:certify} determine directly these $96$ vertices.
	
\begin{figure}[!htbp]
\includegraphics[width=0.5\textwidth]{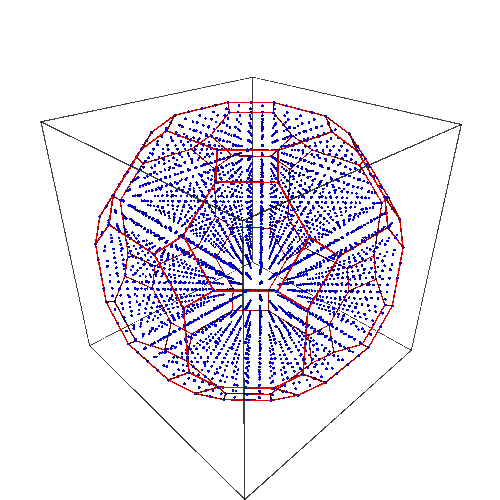}
\caption{The convex hull of all 40320 occupation vectors arising from all permutations of the vertices of the cube $[-1,1]^3$.}
\label{fig:3qubits}
\end{figure}

\noindent
Using the $^\downarrow$-notation, we can write the $H$-representation as
\begin{equation*}
\Sigma_{3,2}=\left\{\bx\in\R^3:\quad
\left(
\begin{array}{rrr}
1 & 0 & 0 \\
1 & 1 & 0 \\
1 & 1 & 1 \\
2 & 1 & 1 \\
\end{array}
\right)\bx^{\downarrow} \leq
\left(
\begin{array}{rrrrrrrr}
1 & 1 & 1 & 1 & -1 & -1 & -1 & -1 \\
2 & 2 & 0 & 0 &  0 &  0 & -2 & -2 \\
3 & 1 & 1 & 1 & -1 & -1 & -1 & -3 \\
4 & 2 & 2 & 0 &  0 & -2 & -2 & -4 \\
\end{array}
\right)\bw
\right\}.
\end{equation*}
\end{example}

\subsection{Certifying that a vector spans a ray}
In general, a vector $\by\in\R^d$ may not induce a total order on $\mathbf{A}$ as there may be ties.
Instead, the linear functional $\np{\by,\cdot}$ induces a \emph{ordered set partition} $\mathcal{S}=(S_1,\dots,S_{k-1},S_{k})$ of $[n]$, as follows:
\begin{itemize}
\item For each $ i = 1, \dots, k-1 $ the set $ S_i $ consists of labels of points where the functional achieves the $i$-th largest value.
\item We have $| S_1 \cup \dots \cup S_{k-2} | < r $ and $| S_1 \cup \dots \cup S_{k-1} | \geq r $.
\item The last set $ S_{k} $ consists of everything else.
\end{itemize}
We call such an ordered set partition induced by some $\by$ an \defn{$r$-ranking}.
Faces of the $r$-lineup polytope $\pol[L]_{r}(\mathbf{A})$ are in bijection with $r$-rankings.
We describe some linear programs that verify whether a given vector $ \bm{y} $ induces an uncoarsenable $r$-ranking or not.
For ease of notation we focus on the case where $ r $ is maximal (and we drop the ``$r$-'' from the name), but the propositions below can be readily adapted to the general setup.

\begin{proposition}\label{prop:lp_realizable}
Let $\mathbf{A}=\{\bv_1,\dots,\bv_n\}\subset\R^d$ and $\mathcal{S}=(S_1,S_2,\dots,S_k)$ be an ordered set partition of $[n]$.
Fix $\{i_1,\dots,i_k\} \subset [n] $ with $i_j\in S_j$ for $j=1,\dots,k$.
For each integer $t\in[k-1]$ consider the linear program given by
\[
\begin{array}{lrcl}
	\text{maximize} & \alpha_t,&&\\
	\text{subject to} & \langle\by,\bv_a-\bv_b\rangle &=&0 \text{ whenever }a,b\in S_k,\\
	&\langle\by,\bv_{i_{j+1}}-\bv_{i_j}\rangle&=&\alpha_{j}\text{ for }j=1,\dots,k-1,\\
	&(\bm{\alpha},\by)&\in&\R_{\geq 0}^{k-1}\times \R^d.
	
\end{array}
\]
The ordered set partition $\mathcal{S}$ is a ranking if and only if the $k-1$ linear programs above each have a positive solution.
\end{proposition}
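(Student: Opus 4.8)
The plan is to recast the equivalence as a statement about a single polyhedral cone of linear functionals and then to finish with a one‑line convexity argument. Let $C\subseteq\R^d$ be the cone of all $\by$ that satisfy the equality constraints appearing in the programs — so that $\np{\by,\cdot}$ is constant on each block of $\mathcal{S}$ — together with the weak inequalities $\np{\by,\bv_{i_j}-\bv_{i_{j+1}}}\geq 0$ for $j=1,\dots,k-1$. Since constancy on blocks makes $\np{\by,\bv_{i_j}}$ independent of the chosen representative $i_j\in S_j$, the cone $C$ and each program are independent of the choice of the $i_j$'s. As $C$ is homogeneous and $\np{\by,\bv_{i_t}-\bv_{i_{t+1}}}\geq 0$ on $C$, the supremum of $\alpha_t$ over the feasible region is either $0$ or $+\infty$, so ``has a positive solution'' means exactly that $C$ contains some $\by$ with $\np{\by,\bv_{i_t}-\bv_{i_{t+1}}}>0$. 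The first thing I would establish is the dictionary: a functional $\by$ induces precisely the ranking $\mathcal{S}$ if and only if $\by\in C$ and \emph{all} of the $k-1$ inequalities $\np{\by,\bv_{i_j}-\bv_{i_{j+1}}}>0$ are strict. One implication just unwinds the definition of ``$\np{\by,\cdot}$ induces $\mathcal{S}$''; for the converse, constancy on blocks together with strictness between consecutive representatives forces, by transitivity along the chain $i_1,\dots,i_k$, that the distinct values of $\np{\by,\cdot}$ on $\mathbf{A}$ are exactly $\np{\by,\bv_{i_1}}>\cdots>\np{\by,\bv_{i_k}}$ with level set $S_j$, i.e.\ the induced ordered set partition is $\mathcal{S}$. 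Hence $\mathcal{S}$ is a ranking if and only if $C$ contains a point at which all $k-1$ gaps are \emph{simultaneously} positive.

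With the dictionary in hand, the forward direction is immediate: if $\mathcal{S}$ is a ranking, take $\by_0$ inducing it; then $\by_0\in C$ and each gap $\np{\by_0,\bv_{i_t}-\bv_{i_{t+1}}}$ is positive, so $\by_0$ is a positive solution of every one of the $k-1$ programs. For the converse — the part that carries the content — suppose each program has a positive solution, and for each $t\in[k-1]$ pick $\by^{(t)}\in C$ with $\np{\by^{(t)},\bv_{i_t}-\bv_{i_{t+1}}}>0$. Set $\by:=\by^{(1)}+\cdots+\by^{(k-1)}$. Since $C$ is a convex cone, $\by\in C$, and for every $j\in[k-1]$
\[
\np{\by,\bv_{i_j}-\bv_{i_{j+1}}} \;=\; \sum_{t=1}^{k-1}\np{\by^{(t)},\bv_{i_j}-\bv_{i_{j+1}}} \;\geq\; \np{\by^{(j)},\bv_{i_j}-\bv_{i_{j+1}}} \;>\; 0,
\]
because every summand is nonnegative ($\by^{(t)}\in C$) and the $j$-th one is strictly positive. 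So $\by$ lies in $C$ with all $k-1$ gaps positive; by the dictionary it induces $\mathcal{S}$, so $\mathcal{S}$ is a ranking.

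I expect the only genuinely delicate step to be the dictionary of the first paragraph: one must check that the equality constraints capture \emph{all} the linear relations a realizing functional must satisfy, and that the strict inequalities capture exactly the separation between consecutive levels (the transitivity argument along the representatives), so that no realizing functional is missed and none is spuriously admitted. Everything afterwards is the convexity trick of adding up the ``local'' witnesses, one gap at a time. For general $r<n$ the scheme is identical once the dictionary is adjusted: the overflow block $S_k$ is not required to be a level set, so its within‑block equalities are dropped and replaced by the strict separations $\np{\by,\bv_{i_{k-1}}-\bv_a}>0$ for $a\in S_k$, which are weakened to $\geq$ in the definition of $C$ and absorbed into the very same summation.
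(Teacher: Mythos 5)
Your proof is correct and follows essentially the same route as the paper's: both directions hinge on the observation that the feasible set is a convex cone of functionals constant on the blocks, and the converse is proved by summing one witness $\by^{(t)}$ per gap so that every gap of the sum is a sum of nonnegative terms with at least one strictly positive term. Your explicit ``dictionary'' (that inducing $\mathcal{S}$ is equivalent to lying in the cone with all $k-1$ gaps strict) is exactly the step the paper phrases as ``the sum induces the common refinement of the induced coarsenings, which equals $\mathcal{S}$.''
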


\begin{proof}
If $ \mathcal{S} $ is a ranking, then there exists a vector $\by\in\R^d$ that whose inner products on consecutive blocks strictly increase.
This implies that there exist positive gaps $\alpha_1,\dots,\alpha_{k-1}$; proving the first direction.

Assume that the $k-1$ linear programs have non-zero solutions $(\alpha_1,\dots,\alpha_{k-1})$.
As the origin satisfy the inequalities, the linear programs are feasible and the solutions satisfy $\alpha_i\geq 0$ for $i\in[k-1]$.
A positive solution $\alpha_t>0$ implies that the ranking induced by the corresponding $\by_t$ is a coarsening of $\mathcal{S}$ with $S _t$ and $S_{t+1}$ in different blocks.
Therefore the vector $\by=\sum_t\by_t$ induces the common refinement of the induced rankings, which is equal to $\mathcal{S}$ by construction.
\end{proof}

Since the face poset $r$-lineup polytopes is isomorphic to the poset of $r$-rankings ordered by coarsening, the facets correspong to the $r$-rankings that cannot be coarsened by any other $r$-ranking.
We call such rankings geometrically \defn{uncoarsenable}.

\begin{proposition}\label{prop:lp_uncoarsenable}
Let $\mathbf{A}=\{\bv_1,\dots,\bv_n\}\subset\R^d$ be a point configuration and $\mathcal{S}=(S_1,S_2,\dots,S_k)$ be a ranking.
Fix a transversal $\{i_1,\dots,i_k\} \subset [n] $ with $i_j\in S_j$ for $j\in[k]$.
For each integer $t\in[k-1]$ consider the linear program given by
\[
\begin{array}{lrcl}
	\text{maximize} & \alpha_1+\dots+\alpha_{k-1},&&\\
	\text{subject to} & \langle\by,\bv_a-\bv_b\rangle &=&0 \text{ whenever }a,b\in S_k,\\
	&\langle\by,\bv_{i_{j+1}}-\bv_{i_j}\rangle&=&\alpha_{j}\text{ for }j=1,\dots,k-1,\\
	&\alpha_t&=&0\\
	&(\bm{\alpha},\by)&\in&\R_{\geq 0}^{k-1}\times \R^d.
		
\end{array}
\]
The ranking $\mathcal{S}$ is geometrically uncoarsenable if and only if zero is the solution to every $k-1$ linear programs above.
\end{proposition}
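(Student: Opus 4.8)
The plan is to characterize the property that $\mathcal{S}$ is geometrically uncoarsenable --- equivalently, that $\pol[F]_{\mathcal{S}}$ is a facet of $\pol[L]_{r}(\mathbf{A})$, equivalently that $\mathcal{S}$ is covered by the ranking of the whole polytope in the coarsening order --- by means of the $k-1$ linear programs, using the $H$-description of lineup cones from Proposition~\ref{prop:main_structure}(2). The starting point is the elementary fact that a coarsening of an ordered set partition is obtained by merging consecutive blocks into intervals; hence every proper coarsening of $\mathcal{S}$ merges some adjacent pair of blocks $S_t$ and $S_{t+1}$, and is therefore a coarsening of the ordered set partition $\mathcal{S}^{(t)}$ obtained from $\mathcal{S}$ by merging only $S_t$ with $S_{t+1}$. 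Moreover, each $\mathcal{S}^{(t)}$ (for $t\in[k-1]$) is itself a ranking: its closed lineup cone is the face $\{\by\in\ncone(\pol[F]_{\mathcal{S}}) : \langle\by,\bv_{i_{t+1}}-\bv_{i_t}\rangle=0\}$ of the polyhedral cone $\ncone(\pol[F]_{\mathcal{S}})$, hence the normal cone of a face of $\pol[L]_{r}(\mathbf{A})$, and every vector in its relative interior induces exactly $\mathcal{S}^{(t)}$. It follows that $\mathcal{S}$ is geometrically uncoarsenable if and only if $\mathcal{S}^{(t)}$ is the ranking of the whole polytope for every $t\in[k-1]$: if that holds, then any proper coarsening of $\mathcal{S}$ coarsens some $\mathcal{S}^{(t)}$ and so equals the ranking of the whole polytope; if it fails for some $t$, then $\mathcal{S}^{(t)}$ lies strictly between $\mathcal{S}$ and that ranking.

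Now fix $t$ and identify the feasible region of the $t$-th linear program with the closed lineup cone of $\mathcal{S}^{(t)}$. The equality constraints make $\by$ constant on each block of $\mathcal{S}$; together with $\alpha_t=0$ they also force $\by$ to be constant on $S_t\cup S_{t+1}$, hence on each block of $\mathcal{S}^{(t)}$; and the conditions $\alpha_j\geq0$ are, by transitivity along the fixed transversal, exactly the wall inequalities of $\mathcal{S}^{(t)}$ supplied by Proposition~\ref{prop:main_structure}(2). On this cone each $\alpha_j$ is nonnegative, so the objective $\alpha_1+\dots+\alpha_{k-1}$ vanishes at a feasible $\by$ if and only if every $\alpha_j$ vanishes there; and since $\by$ is then constant on the blocks, with the common value spreading across all of $\mathbf{A}$, this occurs if and only if $\langle\by,\bv_a-\bv_b\rangle=0$ for all $a,b\in[n]$, that is, if and only if $\by$ lies in the lineality space of $\pol[L]_{r}(\mathbf{A})$. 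As $\mathbf{0}$ is feasible the optimal value is nonnegative, and it equals $0$ precisely when the objective vanishes identically on the feasible cone, that is, when that cone is contained in --- hence coincides with --- the lineality space, that is, when $\pol[F]_{\mathcal{S}^{(t)}}=\pol[L]_{r}(\mathbf{A})$, that is, when $\mathcal{S}^{(t)}$ is the ranking of the whole polytope. Combined with the first paragraph, this proves that all $k-1$ linear programs have optimal value $0$ if and only if $\mathcal{S}$ is geometrically uncoarsenable.

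The point that will take the most care is the identification of feasible regions in the second step: one must check that prescribing constancy on the blocks of $\mathcal{S}$ along a single fixed transversal, together with the sign conditions on the consecutive differences $\bv_{i_{j+1}}-\bv_{i_j}$, recovers by transitivity the full $H$-description of the lineup cone of $\mathcal{S}^{(t)}$, and that a feasible $\by$ that annihilates every gap variable is thereby forced to be constant on all of $\mathbf{A}$ and hence to lie in the lineality space. This last point is exactly what makes ``optimal value $0$'' detect that the coarsened face is the entire polytope rather than merely some larger proper face. The remaining steps --- that ordered set partitions coarsen only by merging consecutive blocks, and that each $\mathcal{S}^{(t)}$ is a ranking because its cone is a face of $\ncone(\pol[F]_{\mathcal{S}})$ --- are routine.
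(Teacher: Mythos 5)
Your proof is correct and, once unwound, follows essentially the same route as the paper's: in both directions everything reduces to the observation that the $t$-th program has positive value exactly when there is a feasible $\by$ inducing a nontrivial proper coarsening of $\mathcal{S}$ in which $S_t$ and $S_{t+1}$ are merged, together with the fact that every nontrivial proper coarsening merges some adjacent pair and hence certifies a positive value for some $t$. The normal-fan packaging (value $0$ iff the feasible cone collapses to the lineality space iff the corresponding face is all of $\pol[L]_{r}(\mathbf{A})$) is a nice gloss, not a different argument. Two caveats. First, your intermediate claim that each $\mathcal{S}^{(t)}$ is itself a ranking, realized by the relative interior of the face $F_t=\{\by\in\ncone(\pol[F]_{\mathcal{S}})\,:\,\langle\by,\bv_{i_{t+1}}-\bv_{i_t}\rangle=0\}$, is false in general: that relative interior may be forced to satisfy further equalities and then realizes a strictly coarser partition than $\mathcal{S}^{(t)}$. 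Your argument survives because it only ever uses whether $F_t$ strictly contains the lineality space, but the phrase ``$\mathcal{S}^{(t)}$ lies strictly between $\mathcal{S}$ and that ranking'' should refer to the ranking actually realized by $F_t$. Second, you read the equality constraints as forcing $\by$ to be constant on \emph{every} block of $\mathcal{S}$, whereas the program as printed only constrains $S_k$; your reading is the one under which the feasible region sits inside $\ncone(\pol[F]_{\mathcal{S}})$ and the equivalence holds (and it is the reading the paper's own proof tacitly adopts), so this is a discrepancy with the printed linear program rather than a flaw in your reasoning.
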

\begin{proof}
Assume that the $t$-th linear program has a positive solution.
Then, there exists a vector $\by_t$ whose induced ranking is a coarsening of $\mathcal{S}$ such that
(1) the points with indices in $S_t$ and $S_{t+1}$ are together in the same block, and (2) it has at least two blocks (since $\alpha_t>0$).
Therefore $\mathcal{S}$ is geometrically coarsenable.

If $\mathcal{S}$ has a nontrivial coarsening $\mathcal{R}$ given by some vector $\by$, the coarsening must
(1) contain at least two blocks, and (2) merge two consecutive blocks of $\mathcal{S}$.
Therefore, $\by$ provides a positive solution $\bm{\alpha}$ to a integer program for some $t\in[k-1]$.
\end{proof}

As described in \cite{pak_what_2022} combinatorial interpretations shall be understood as $\#\mathsf{P}$ counting problems.
Here, we prove that given a $n$ point configuration $\mathbf{A}$ and an ordered set partition $\mathcal{S}$ of $[n]$, the problem of determining whether $\mathcal{S}$ corresponds to a facet of $\pol[L]_{r,\bw}(\mathbf{A})$ is in $\mathsf{NP}$.

\begin{corollary}
The problem of counting the number of facets of the lineup polytope $\pol[L](\mathbf{A})$ is in $\#\mathsf{P}$.
\end{corollary}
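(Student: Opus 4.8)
The plan is to realize each facet of $\pol[L](\mathbf{A})$ as a short certificate that can be checked in polynomial time, which is exactly what membership in $\#\mathsf{P}$ demands. Recall that the facets of $\pol[L](\mathbf{A})$ are in bijection with the geometrically uncoarsenable rankings of $[n]$, that is, with certain ordered set partitions $\mathcal{S}=(S_1,\dots,S_k)$ of $[n]$. Such an ordered set partition is specified by recording, for each element of $[n]$, the index of its block together with the linear order on the (at most $n$) blocks, so it has an encoding of size $O(n\log n)$, which is polynomial in the size of the input configuration $\mathbf{A}=\{\bv_1,\dots,\bv_n\}$. Thus a nondeterministic machine may guess $\mathcal{S}$ using polynomially many bits, and what remains is to exhibit a deterministic polynomial-time verifier that accepts $(\mathbf{A},\mathcal{S})$ precisely when $\mathcal{S}$ indexes a facet; the number of accepting computations will then equal the number of facets.

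First I would have the verifier check that $\mathcal{S}$ is a genuine ordered set partition of $[n]$ satisfying the structural constraints of a ranking, which is immediate, and then fix the canonical transversal $i_j=\min S_j$ for $j\in[k]$. To test that $\mathcal{S}$ is a ranking, apply Proposition~\ref{prop:lp_realizable}: form the $k-1$ linear programs described there and, for each, decide whether the optimal value of $\alpha_t$ is strictly positive (an unbounded program counting as positive). Each such linear program has $d+k-1$ variables and only polynomially many constraints, all of whose coefficients are differences $\bv_a-\bv_b$ of entries of $\mathbf{A}$ and hence of polynomial bit-size; by Khachiyan's theorem on polynomial-time linear programming over $\mathbb{Q}$, each optimum is computed in time polynomial in the input, and there are at most $k-1<n$ of them. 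By Proposition~\ref{prop:lp_realizable}, $\mathcal{S}$ is a ranking if and only if all of these tests return ``positive''.

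If $\mathcal{S}$ survives that phase, the verifier then invokes Proposition~\ref{prop:lp_uncoarsenable}: with the same transversal it forms the $k-1$ linear programs with objective $\alpha_1+\dots+\alpha_{k-1}$ and the extra equality $\alpha_t=0$, solves each (again in polynomial time), and accepts $\mathcal{S}$ exactly when every one of these programs has optimal value $0$. By that proposition this happens if and only if the ranking $\mathcal{S}$ is geometrically uncoarsenable, i.e., corresponds to a facet of $\pol[L](\mathbf{A})$. Stringing the two phases together yields a polynomial-time verifier whose accepting inputs $\mathcal{S}$ are exactly the facets, so the facet-counting function is the number of polynomially bounded witnesses accepted by a polynomial-time predicate, which places it in $\#\mathsf{P}$. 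The only delicate point is the appeal to exact polynomial-time linear programming over the rationals together with the routine check that all the numbers produced stay of polynomial size; granting that, Propositions~\ref{prop:lp_realizable} and~\ref{prop:lp_uncoarsenable} supply everything else, so I do not anticipate a serious obstacle.
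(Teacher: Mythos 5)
Your proposal is correct and follows essentially the same route as the paper: encode a candidate facet as an ordered set partition of polynomial size and verify it with the $2k-2$ linear programs of Propositions~\ref{prop:lp_realizable} and~\ref{prop:lp_uncoarsenable}, each solvable in polynomial time. You merely spell out the certificate encoding and the two verification phases in more detail than the paper does.
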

\begin{proof}
Propositions \ref{prop:lp_realizable} and \ref{prop:lp_uncoarsenable} give a method to certify that an ordered set partition with $k$ parts is realizable and uncoarsenable.
This is done with $2k-2$ linear programs each of which has $n$ equalities, at most $k$ inequalities and $d+k-1$ variables.
Since each linear program can be solved in polynomial time \cite[Theorem 13.4]{schrijver_theory_1998} the conclusion follows.
\end{proof}

\section{General products of simplices}
\label{sec:gen_prod}

In this section, we study the lineup polytopes of product of simplices without any restrictions on their dimensions or their numbers.
In the following section, we consider the case of products of segments.
To ease the exposition, we treat the case where all simplices have the same dimension but everything extends to the general case.

\subsection{Combinatorial algorithms}
We first define an important poset that helps us navigate sweeps.
For a standard reference on posets, see \cite[Chapter~3]{stanley_enumerative_2012}.

\begin{definition}\label{def:youngLattice}
Let $\bi$ be the total order on the set $\{0,1,2,\dots,i\}$.
Given integers $d,N$, let $P(d,N) := \bm{d} \times \dots \times \bm{d}$ be the Cartesian product of $N$ copies of $\bm{d}$.
The \defn{$N$-dimensional Young lattice} $J(P(d,N))$ consists of lower order ideals of $P(d,N)$.
\end{definition}

By setting $N=2$ in Definition \ref{def:youngLattice}, we get the usual Young lattice on Young diagrams contained in a $(d+1) \times (d+1) $ box.
Let $\Delta_{d-1}$ be the $ (d - 1) $-dimensional regular simplex whose vertices are the canonical basis vectors of~$\R^d$.
Define $ \Pi_{d,N} \subset \mathbb{R}^{d \times N} $ as the Cartesian product $\Delta_{d-1}\times \dots\times \Delta_{d-1}$ of $N$ copies of $ \Delta_{d-1}$.
The vertices of~$\Pi_{d,N}$ are in natural bijection with the elements of the poset $P(d-1,N)$. 
In order to obtain the lineup polytope of $\Pi_{d,N} $, we use symmetry.
The group $ \mathfrak{S}_d \wr \mathfrak{S}_{N} $ acts naturally on $ \mathbb{R}^{d \times  N} $ and it fixes the polytope $\Pi_{d,N}$ and thus its normal fan.
Without loss of generality, we may restrict the set of linear functionals to the \defn{test cone}~$\mathsf{T}_{d,N}$ in $\mathbb{R}^{d\times N}$, whose coordinates are all positive and increase in each factor.
More precisely, 
\begin{equation}\label{eq:testCone}
\mathsf{T}_{d,N} := \left\{ \bm{x} \in \mathbb{R}^{d \times  N } ~\middle|~ 0 \leq x_{1,j} \leq \dots \leq x_{d,j} \text{ for }j\in[N]  \right\}.
\end{equation}
Due to the convention of ordering decreasingly the values to obtain a lineup, we momentarily need to consider \emph{upper} order ideals, for the following proposition to fit properly later.
Upper order ideals in finite posets are in natural correspondance with \emph{lower} order ideals.

\begin{proposition}\label{prop:orderIdeals}
Let $ \ell = (\bm{v}_{1}, \dots , \bm{v}_{r})$ be an $ r $-lineup of the vertices of $ \Pi_{d,N} $ induced by a linear functional $ \bm{y} \in \mathsf{T} _{d,N}$.
The corresponding elements of the poset $P(d,N)$ form a upper order ideal.
\end{proposition}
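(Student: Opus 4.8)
The plan is to exploit the monotonicity of functionals in the test cone. First I would make the bijection between the vertices of $\Pi_{d,N}$ and $P(d-1,N)$ recalled above explicit: the vertex $(\be_{a_1},\dots,\be_{a_N})$ of $\Delta_{d-1}^{N}$ corresponds to the element $\bm{a}=(a_1,\dots,a_N)$ of $P(d-1,N)$, and for $\by\in\mathsf{T}_{d,N}$ one has $\np{\by,\bv_{\bm{a}}}=\sum_{j=1}^{N} y_{a_j,j}$.

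The key step is the observation that $\bm{a}\mapsto\np{\by,\bv_{\bm{a}}}$ is order preserving on $P(d-1,N)$: the defining inequalities $y_{1,j}\le\dots\le y_{d,j}$ of $\mathsf{T}_{d,N}$ say precisely that raising a coordinate of $\bm{a}$ cannot decrease the corresponding summand, so $\bm{a}\le\bm{b}$ implies $\np{\by,\bv_{\bm{a}}}\le\np{\by,\bv_{\bm{b}}}$. Since $\by$ is generic (it induces a total order on the vertices, hence is injective on them), this inequality is strict whenever $\bm{a}<\bm{b}$; thus the total order on the vertices induced by $\by$ is a linear extension of the partial order $P(d-1,N)$.

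It then remains to observe that the underlying set $\{\bv_1,\dots,\bv_r\}$ of the lineup $\ell$ is exactly the set of the $r$ largest vertices in this total order, i.e.\ an up-set (order filter) of a linear extension of $P(d-1,N)$; and any order filter of a linear extension is an order filter of the original poset. Indeed, if $p$ belongs to the set and $q\ge p$ in $P(d-1,N)$, then $q$ is at least as large as $p$ in the linear extension, hence lies among the top $r$ as well. This is precisely the assertion that the corresponding elements form an upper order ideal, which together with the remark preceding the statement translates into a lower order ideal.

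I do not anticipate a genuine obstacle: the whole argument is the monotonicity observation of the second paragraph, and everything else is bookkeeping. The two points that deserve a word of care are that $\by$ may be chosen generic inside the full-dimensional cone $\mathsf{T}_{d,N}$, so that the notion of lineup applies while remaining in the test cone, and keeping straight the upper-versus-lower order ideal convention flagged just before the statement.
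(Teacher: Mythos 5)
Your proposal is correct and follows essentially the same route as the paper: the paper's (one-sentence) proof is exactly the observation that the defining inequalities of $\mathsf{T}_{d,N}$ make the functional order-preserving on $P(d-1,N)$, so poset-larger vertices come earlier and the top $r$ form an upper order ideal. Your version just spells out the bijection, the linear-extension language, and the genericity caveat in more detail.
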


\begin{proof}
For any $ \bm{v} \in  \Pi_{d,N}  $ the defining linear inequalities of the test cone in Equation \eqref{eq:testCone} imply that all upper (in the poset) elements must have a larger value hence come earlier in the lineup.
\end{proof}

Since any initial segment of a lineup is itself a lineup, then Proposition \ref{prop:orderIdeals} gives us more refined information.
An $ r $-lineup of $ \Pi_{d,N} $ is a \textit{saturated chain of ideals of length $ r $ in the poset $J(P(d,N))$.}
However, the opposite is not true, see \cite[Section~9.3]{castillo_effective_2021} for some examples that may be extended here and also \cite[Introduction]{mallows_young_2015}.
Nonetheless, we use this correspondence to generate all potential lineups for which we may recursively verify whether they are \emph{realizable} or not using a set of inequalities in the test cone.
The set of vectors $\by$ that yield a certain lineup turns out to be the set of certificates of feasibility of LPs that Mallows and Vanderbei used to obtain so-called \emph{realizable} Young tableaux.
Recursively constructing the certificates provides an efficient method to enumerate the realizable Young tableaux instead of performing an LP for each Young tableau, see Section~\ref{ssec:prod_two}.
In \cite{castillo_effective_2021}, the potential lineups are refered to as \emph{shifted ideals} and the \emph{realizable} ones to the stricter notion of \emph{threshold ideals}.
To be correct, we should be referring to them as ``saturated chains of length $r$ exhausting a shifted or threshold ideal'', but that is rather wasteful.

We define $ \mathcal{L}^r_{d,N} $ the normal fan of the lineup polytope $\pol[L]_{r,\bw}(\Pi_{d,N})$, and as usual where drop $ r $ from the notation when it is maximal.
Instead of computing the complete normal fan, we instead compute its intersection with the test cone.
This process induces a fan supported on the test cone that we call the \defn{test fan} of $ \pol[L]_{r,\bw}(\Pi_{d,N}) $ and denote it by $\mathcal{T}^r_{d,N}$.
By restricting Equation \eqref{eq:refinement} to the test cone we obtain the sequence
\begin{equation*}
\mathsf{T}_{d,N}=\mathcal{T}^1_{d,N}  \succeq \mathcal{T}^{2}_{d,N} \succeq \dots \succeq \mathcal{T}_{d,N}.
\end{equation*}
When $r=1$, $ \mathcal{T}^1_{d,N} $ is simply the whole cone $\mathsf{T}_{d,N}$ and all of its faces.

\begin{proposition}\label{prop:partial}
Let $\tau$ be a ray of $\mathcal{T}^r_{d,N}$. 
The vector $\tau$ is a ray of $\mathcal{L}_{d,N}$, the fan of the sweep polytope~$\pol[L](\Pi_{d,N})$.
However, $\tau$ is not necessarily a ray of $\mathcal{L}^r_{d,N}$.
\end{proposition}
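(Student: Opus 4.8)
The statement has two parts. For the first claim—that every ray $\tau$ of $\mathcal{T}^r_{d,N}$ is a ray of $\mathcal{L}_{d,N}$—the plan is to trace the refinement chain
\[
\mathsf{T}_{d,N}=\mathcal{T}^1_{d,N}\succeq\mathcal{T}^2_{d,N}\succeq\dots\succeq\mathcal{T}^r_{d,N}\succeq\dots\succeq\mathcal{T}_{d,N}
\]
together with its ``ambient'' counterpart \eqref{eq:refinement}. The key observation is that as one passes from a fan to a refinement, the set of rays can only grow: every ray of the coarser fan remains a ray of the finer one, and new rays may appear. Since $\mathcal{T}_{d,N}$ is the restriction to the test cone of $\mathcal{L}_{d,N}=\mathcal{N}(\pol[L](\Pi_{d,N}))$, and $\mathcal{T}^r_{d,N}$ is an \emph{intermediate} coarsening of $\mathcal{T}_{d,N}$, any ray of $\mathcal{T}^r_{d,N}$ persists as a ray of the finer fan $\mathcal{T}_{d,N}$, hence is a ray of $\mathcal{L}_{d,N}$. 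I would make the ``rays persist under refinement'' step precise: if $\tau$ spans a $1$-dimensional cone that is a face of the fan $\Sigma$, and $\Sigma'\succeq\Sigma$ is a refinement, then $\tau$ still spans a $1$-dimensional cone of $\Sigma'$; indeed the ray $\cone(\tau)$ is a union of cones of $\Sigma'$, and being $1$-dimensional it must equal one such cone. Combined with the $\mathfrak{S}_d\wr\mathfrak{S}_N$-symmetry (a ray in the test cone is a genuine ray of the full fan, not an artifact of slicing), this gives the first assertion. One should be slightly careful that $\tau$ is a ray of the restricted fan in the interior sense—i.e. not a ray of $\mathcal{L}^r_{d,N}$ that happens to lie on the boundary $\partial\mathsf{T}_{d,N}$ and becomes lower-dimensional upon intersecting—but since rays of $\mathcal{T}^r_{d,N}$ on the boundary of the test cone are faces of walls of $\mathsf{T}_{d,N}$ which are themselves faces of $\mathcal{L}^r_{d,N}$-cones, they still refine to rays of $\mathcal{T}_{d,N}$.

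\textbf{The second part} asserts that $\tau$ need \emph{not} be a ray of $\mathcal{L}^r_{d,N}$. For this I would exhibit an explicit small example, the most economical being a product of two segments with $r<4$, or a hypercube $(\Delta_1)^3$ with $r<8$ drawn from Examples~\ref{ex:2qb} and~\ref{ex:3qb}. The idea: choose a ray $\tau$ that appears only at a later stage of the refinement chain, i.e. $\tau$ is a ray of $\mathcal{T}^{r'}_{d,N}$ for some $r'>r$ but not of $\mathcal{T}^r_{d,N}$—wait, this is the reverse direction; what we actually need is a $\tau$ that is a ray of $\mathcal{T}^r_{d,N}$ that is \emph{not} a ray of $\mathcal{L}^r_{d,N}$. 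The subtlety is that $\mathcal{T}^r_{d,N}$ is the intersection of $\mathcal{L}^r_{d,N}$ with the test cone, and intersecting a fan with a cone can \emph{create} new rays on the boundary of that cone: these are intersections of $2$-dimensional cones of $\mathcal{L}^r_{d,N}$ with walls of $\mathsf{T}_{d,N}$. Concretely I would find a wall of $\mathsf{T}_{d,N}$ (one of the hyperplanes $x_{i,j}=x_{i+1,j}$ or $x_{1,j}=0$) that cuts through the interior of a maximal cone $\sigma$ of $\mathcal{L}^r_{d,N}$; then $\sigma\cap\mathsf{T}_{d,N}$ acquires facets lying in that wall, whose own faces include rays that are \emph{not} rays of $\mathcal{L}^r_{d,N}$. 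Picking such a ray gives the counterexample. I would verify in the chosen small instance (by direct computation of the finitely many occupation vectors and their normal cones, as in Example~\ref{ex:2qb}) that the produced $\tau$ genuinely fails to be a ray of $\mathcal{L}^r_{d,N}$ while—consistent with part one—it \emph{is} a ray of the fully refined fan $\mathcal{L}_{d,N}$ because further refinement eventually ``catches up'' to the cut introduced by the test-cone wall.

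\textbf{Main obstacle.} The conceptually delicate point, and where I expect the real work to be, is disentangling the two ways a ``ray of the test fan'' can arise: (a) as the image under restriction of an honest ray of $\mathcal{L}^r_{d,N}$, versus (b) as a newly-created ray sitting on $\partial\mathsf{T}_{d,N}$ coming from slicing a higher-dimensional cone. Part one of the proposition is really the statement that \emph{both} types survive all the way down to $\mathcal{L}_{d,N}$, while part two is the statement that type (b) rays (and possibly some type that only become extremal after more refinement steps) witness strict failure at level $r$. Making this clean requires being careful that "ray of $\mathcal{T}^r_{d,N}$'' means a $1$-dimensional cone of the fan $\mathcal{T}^r_{d,N}$ supported on $\mathsf{T}_{d,N}$, and tracking which such cones are faces of $\mathcal{L}^r_{d,N}$-cones versus merely intersections. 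I would therefore state a short lemma at the outset: for a polyhedral fan $\Sigma$ in $\R^m$ and a full-dimensional cone $C$ with $\Sigma|_C$ denoting the induced fan on $C$, every ray of $\Sigma|_C$ is either a ray of $\Sigma$ or lies on $\partial C$; and under any refinement $\Sigma'\succeq\Sigma$, every ray of $\Sigma|_C$ is a union of, hence equal to, a ray of $\Sigma'|_C$. With that lemma the first half is immediate, and the second half is then purely a matter of producing the explicit example.
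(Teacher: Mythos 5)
Your overall route for the first assertion is the same as the paper's: rays persist under refinement, so a ray $\tau$ of $\mathcal{T}^r_{d,N}$ is a ray of the final test fan $\mathcal{T}_{d,N}$, and one must then argue that a ray of $\mathcal{T}_{d,N}$ is a genuine ray of $\mathcal{L}_{d,N}$ rather than an artifact of slicing by $\mathsf{T}_{d,N}$. The gap is in that last step, which is the only substantive content of the proposition. Your closing lemma only says that a ray of $\Sigma|_C$ is either a ray of $\Sigma$ or lies on $\partial C$; it does not dispose of the boundary case, so the first half is \emph{not} ``immediate'' from it. Worse, the justification you offer for the boundary case --- that rays of $\mathcal{T}^r_{d,N}$ on $\partial\mathsf{T}_{d,N}$ sit in ``walls of $\mathsf{T}_{d,N}$ which are themselves faces of $\mathcal{L}^r_{d,N}$-cones'' --- is false at level $r$: the walls of the test cone cut through the interiors of $r$-lineup cones (for $r=1$ the unique lineup cone meeting the test cone strictly contains it), and this failure is precisely what makes the second half of the proposition true. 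The correct argument, which the paper supplies and which works only for the terminal fan $\mathcal{L}_{d,N}$, is that a functional $\by$ lying on a wall $x_{i,j}=x_{i+1,j}$ of $\mathsf{T}_{d,N}$ ties the two vertices of $\Pi_{d,N}$ obtained by exchanging $\be_i$ and $\be_{i+1}$ in the $j$-th factor, hence cannot lie in the interior of any maximal sweep cone; consequently the interior of every maximal cone of $\mathcal{L}_{d,N}$ is either contained in or disjoint from $\mathsf{T}_{d,N}$, the test fan $\mathcal{T}_{d,N}$ is a subfan of $\mathcal{L}_{d,N}$, and the final restriction creates no new cones and no new rays. Your parenthetical appeal to the $\mathfrak{S}_d\wr\mathfrak{S}_N$-symmetry gestures at this but does not prove it, and it is the one thing that actually needs proving.

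For the second assertion you correctly identify the mechanism (slicing an intermediate fan by $\mathsf{T}_{d,N}$ creates rays on $\partial\mathsf{T}_{d,N}$ that are not rays of $\mathcal{L}^r_{d,N}$), but you stop at ``I would find\dots'' without exhibiting a ray, so the claim remains unverified in your write-up; to be fair, the paper's own proof also omits an explicit witness. If you repair the first half with the tie argument above and pin down one concrete ray in, say, $\mathcal{T}^1_{d,N}$ (any ray of $\mathsf{T}_{d,N}$ other than those of the single lineup cone of $\be_d\times\cdots\times\be_d$ works), the proof is complete.
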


\begin{proof}
Since we are refining the fan at each step it means that $ \tau $ is a ray of $ \mathcal{T}_{d,N} $.
In the sweep polytope~$\pol[L](\Pi_{d,N})$, the interior of every lineup cone is either contained in the test cone $ \mathsf{T}_{d,N} $ or disjoint from it.
Indeed, a sweep induced by an element not in the test cone is necessarily different from one induced by a test element.
This means that that normal fan of the sweep polytope intersected with the test cone, i.e. its test fan, is simply the set of the lineup cones contained in the test cone; there are no new cones generated by the intersection with $\mathsf{T}_{d,N}$.
It follows that $\tau$ is a ray of $\mathcal{L}_{d,N}$ as sought.
\end{proof}

The algorithm described below is an adjusted version of \cite[Algorithm F]{castillo_effective_2021}.
It yields all $r$-lineups of products of simplices.
More precisely, we compute the lineup cones, as we need a certificate that an ordered list of $r$ points is indeed a lineup.
To obtain the lineup cones, we restrict the computation to their intersection with the test cone to consider each orbit exactly once and use recursion on $r$. 
The set of candidates to append to an $r$-lineup is taken from Proposition~\ref{prop:orderIdeals}.

\begin{algorithm}[Recursively construct lineups]
\label{algo:test}
Let $r,d,N$ be positive integers such that $ 1 \leq r \leq d^N $.
The following procedure compute all $r$-lineups cones of the polytope $\Pi_{d,N}$ that intersect the interior of the test cone $\mathsf{T}_{d,N}$.

\begin{description} 
\item[Base case, $r=1$] If the vector $\by$ is in the test cone, then there is only one possibility: the normal cone of the vertex $ \bm{e}_d \times \dots \times \bm{e}_{d} $ intersected with the test cone is equal to the complete cone $\mathsf{T}_{d,N}$ whose $H$-representation is given in Equation \eqref{eq:testCone}.

\item[Inductive step, $r>1$] Having all $r$-lineups together with their normal cones in $\mathsf{T}_{d,N}$, we proceed to obtain the $(r+1)$-lineups.
For each $r$-lineup $\ell$ cone $\mathsf{K}$, the possible candidates for being in position $r+1$ are limited by the partial order on $J(P(d,N))$.
Say there are candidates $C\subseteq J(P(d,N)) $, then $\bi$ is allowed to be the next one if and only if the cone
\begin{equation} \label{eq:r+1}
\mathsf{K} \cap \left\{ \bm{y} \in \prod_{j=1}^{N} \mathbb{R}^{d} ~:~ \np{ \bm{y} , \bm{v}(\bi) } \geq \np{ \bm{y} , \bm{v}(\bj) }, \text{ for all } \bj\in C\setminus\{\bi\} \right\}
\end{equation}
has the same dimension as that of $\mathsf{K}$.
If so, then $ ( \ell,\bi_1 ) $ is an $r$-lineup and Equation~\eqref{eq:r+1} describes its corresponding normal cone. 
Otherwise, we discard it.
	
\end{description}
\end{algorithm}

\begin{algorithm}[Reduction step to obtain facets]
\label{algo:certify}
Having an $H$-representation in Equation~\eqref{eq:r+1}, we translate it into a $V$-representation to obtain its extremal rays.
These rays form the set of potential facet inequalities for a given lineup.
\begin{description}
\item[Certifying rays] 
We can assert that the extremal rays are indeed rays of the lineup fan (and not a product of intersecting with the test cone) by computing the induced ranking and using the linear program in Proposition \ref{prop:lp_uncoarsenable} to verify that the ranking is uncoarsenable.
\end{description}
\end{algorithm}

Algorithm~\ref{algo:test} computes the complete list of $r$-lineups together with certificates of their existence (actually the set of all certificates which is the lineup fan).
If we want to have the rays of the fan, it is better to keep both representations at every step of Algorithm~\ref{algo:test} prior to apply Algorithm~\ref{algo:certify}.

\begin{remark} \label{rem:assertRays}
If we are interested in rays of the sweep fan, then asserting rays in the last step by Proposition~\ref{prop:partial} is not necessary.
Even though we may obtain non-facet-defining rays for the $ r $-lineup fans with the previous steps, all the sweep cones are subsets of the test cone so any ray appearing there is an actual ray of the last fan, i.e. the sweep fan.
\end{remark}

\begin{question}
Finally, we wonder whether it is possible to characterize the family of symmetric polytopes for which an adapted recursive procedure such as the one above exists.
As far as we know, this procedure works for hypersimplices, dilated simplices and products of them.
Which other operations on polytopes could be done?
\end{question}

\subsection{Examples}

\subsubsection{Product of two simplices}
\label{ssec:prod_two}

Using the test cone on the product of two simplices $\Delta_e\times\Delta_f$, we restrict ourselves to linear functionals of the form $(\bm{a},\bm{b})$ where
\[
0=a_1\leq a_2\leq\dots\leq a_{e+1},\qquad \text{and} \qquad 0=b_1\leq b_2\leq\dots\leq b_{f+1}.
\]
The values of the functional $(\bm{a},\bm{b})$ on the vertices of  $\Delta_e\times\Delta_f$ are the elements of the set $\{a_i+b_j~:~(i,j)\in[e+1]\times[f+1]\}$.
We may arrange them in a $(e+1)\times (f+1)$ tableau, and replace the entries by their relative order from 1 to $\bm{a}\bm{b}$.
\begin{example}
For example let $e=2,f=3$ and the linear functional $(\bm{a},\bm{b})=(0,1,7;0,3,8,11)$ then the tableaux are given in Figure \ref{fig:tableau_example}.
\begin{figure}[ht]
\begin{tikzpicture}[scale=0.6]
\node at (2,1.5) {$\Longrightarrow$};
\begin{scope}[xshift=-4cm]
\draw[black,thin] (0,0) grid (4,3);
\node[left] at (0,2.5) {$\bm{0}$};
\node[left] at (0,1.5) {$\bm{1}$};
\node[left] at (0,0.5) {$\bm{7}$};
\node[above] at (0.5,3) {$\bm{0}$};
\node[above] at (1.5,3) {$\bm{3}$};
\node[above] at (2.5,3) {$\bm{8}$};
\node[above] at (3.5,3) {$\bm{11}$};

\node at (0.5,2.5) {0};
\node at (1.5,2.5) {3};
\node at (2.5,2.5) {8};
\node at (3.5,2.5) {11};
\node at (0.5,1.5) {1};
\node at (1.5,1.5) {4};
\node at (2.5,1.5) {9};
\node at (3.5,1.5) {12};
\node at (0.5,0.5) {7};
\node at (1.5,0.5) {10};
\node at (2.5,0.5) {15};
\node at (3.5,0.5) {18};
\end{scope}
\begin{scope}[xshift=4cm]
	\draw[black,thin] (0,0) grid (4,3);
	
	\node at (0.5,2.5) {1};
	\node at (1.5,2.5) {3};
	\node at (2.5,2.5) {6};
	\node at (3.5,2.5) {9};
	\node at (0.5,1.5) {2};
	\node at (1.5,1.5) {4};
	\node at (2.5,1.5) {7};
	\node at (3.5,1.5) {10};
	\node at (0.5,0.5) {5};
	\node at (1.5,0.5) {8};
	\node at (2.5,0.5) {11};
	\node at (3.5,0.5) {12};
\end{scope}
\end{tikzpicture}
\caption{Values on the left and their relative positions on the right.}
\label{fig:tableau_example}
\end{figure}
\end{example}

For any given vector $(\bm{a},\bm{b})$, we consider the tableau of relative orders.
By construction, this tableau is weakly increasing along rows and columns.
Furthermore, it has the following properties
\begin{align}
\text{if } T(i,j)=T(i,j+1) \text{ for some }i\in[e+1], &\text{ then } T(i,j)=T(i,j+1) \text{ for all }i\in[e+1] \label{eq:row}\\
\text{if } T(i,j)=T(i+1,j) \text{ for some }j\in[f+1], &\text{ then } T(i,j)=T(i+1,j) \text{ for all }i\in[f+1] \label{eq:column}
\end{align}
We call a tableau satisfying Equations \eqref{eq:row}-\eqref{eq:column} a \emph{constrained Young tableau}.
If the vector is generic there are no ties and the associated tableau of relative orders is a \emph{standard Young tableau} or SYT for short.
A tableau is called \defn{realizable} if it is induced by a sweep; the definition originates from \cite{mallows_young_2015}.

\begin{example}
\label{ex:d=0}
When $e=0$ the product is isomorphic to $\Delta_f$.
In this case there is only one possible SYT and it is of course realizable.
It follows that for simplex any ordering of its vertices is a sweep.
Its sweep polytope is the permutohedron $\text{Perm}(\bw)$.
As long as $\bw$ is strictly decreasing the rays of its normal fan in the test cone are given by $(\underbrace{0,\dots,0}_{k},\underbrace{1,\dots,1}_{f+1-k})$, for all $k\in[e]$.
\end{example}

\noindent
\textbf{The case $e=1$.} 
In this case, every standard Young tableau is realizable, as was noted by Klyachko in \cite[Example 4.1.2]{klyachko_quantum_2004}.
It also appears in \cite[Section 2]{mallows_young_2015} and in \cite[Theorem 7.10]{black_flag_2022} in connection with monotone path polytopes.
We prove it once again to cover the case of rankings, not just sweeps.
\begin{proposition}
Every constrained Young tableau of size $2\times m$ is realizable.
\end{proposition}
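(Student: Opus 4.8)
The plan is to exhibit, for any constrained Young tableau $T$ of shape $2 \times m$, an explicit linear functional $(\bm{a},\bm{b})$ in the test cone of $\Delta_1 \times \Delta_{m-1}$ whose induced constrained tableau of relative orders is exactly $T$. Here the vertices of $\Delta_1 \times \Delta_{m-1}$ are indexed by pairs $(i,j)$ with $i \in \{1,2\}$, $j \in [m]$, the functional value at $(i,j)$ is $a_i + b_j$ with $0 = a_1 \le a_2$ and $0 = b_1 \le b_2 \le \dots \le b_m$, and a tableau is realizable precisely when it arises from such a functional (equivalently, from a ranking, by Proposition~\ref{prop:lp_realizable} applied to the test cone). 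Since we are in the $e=1$ case, the tableau has two rows $T(1,\cdot)$ and $T(2,\cdot)$, each weakly increasing, with $T(1,j) \le T(2,j)$ for all $j$, and satisfying the tie-propagation rules \eqref{eq:row}--\eqref{eq:column}.

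First I would reduce to the strict (SYT) case: by rule \eqref{eq:column}, an equality $T(1,j) = T(2,j)$ in some column forces equality in every column, which corresponds to taking $a_2 = a_1 = 0$; and by rule \eqref{eq:row}, equal columns $j,j+1$ correspond to $b_j = b_{j+1}$. So a constrained tableau is obtained from a strict standard Young tableau by a sequence of "column collapses" ($a_2 \to 0$) and "adjacent-column collapses" ($b_j \to b_{j+1}$), and it suffices to realize the strict case and then pass to limits/degenerations of the functional while staying in the (closed) test cone — the ordered set partition only coarsens, and the coarsening is exactly the one dictated by the collapse. Concretely, I would realize the strict tableau by a functional with well-separated values, then perturb toward the degenerate functional.

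For the strict case, the key step is to read the value $b_j$ (and the single parameter $a_2$) off the tableau $T$ directly. Think of $T$ as recording, for the $2m$ quantities $\{a_1+b_j\} \cup \{a_2+b_j\}$, their sorted order $1,\dots,2m$. Equivalently, a $2 \times m$ SYT is the same data as a lattice path / shuffle: reading the entries $1,2,\dots,2m$ in order, each belongs either to row $1$ or row $2$, and the positions within a row are forced to be left-to-right, so $T$ is determined by the subset $S \subseteq [2m]$ of values appearing in row $1$. The condition $T(1,j) \le T(2,j)$ is the ballot condition: among $\{1,\dots,k\}$, at least as many lie in row $1$ as in row $2$, for every $k$. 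Given such an $S$, I would set $b_j := $ (value in $\mathbb{R}$ assigned so that the $j$-th row-$1$ entry and the row-$2$ entries interleave correctly) — for instance, let the sorted sequence of the $2m$ values be $1 < 2 < \dots < 2m$, declare the actual real values to be $1,2,\dots,2m$ themselves, and then solve $a_1 + b_j = (\text{value of }j\text{-th row-1 cell})$, $a_2 + b_j = (\text{value of }j\text{-th row-2 cell})$. This overdetermined system is consistent precisely because $T(2,j) - T(1,j)$ must be constant in $j$ — which it is not in general! So instead I would choose $a_2$ small enough (smaller than the minimum gap between consecutive distinct values in the intended order) and define $b_j$ by the row-$1$ requirement $b_j := (\text{rank-value of }T(1,j)) - a_1$, then check that the resulting values $a_2 + b_j$ slot into the order exactly where row $2$ demands — the ballot/interleaving property of $S$ is precisely what makes this work, with $a_2$ acting as an infinitesimal shift that moves each row-$1$ value up to its row-$2$ counterpart without crossing any other value.

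The main obstacle I anticipate is making the last paragraph's "infinitesimal shift" argument genuinely rigorous rather than heuristic: one must show that a single scalar $a_2 > 0$ simultaneously places all $m$ shifted values $a_2 + b_j$ correctly relative to the $2m$-element order, uniformly in $j$, and this is where the ballot condition $T(1,j)\le T(2,j)$ (equivalently, row $1$ never falls behind row $2$) enters — it guarantees that between the $j$-th row-$1$ cell and the $j$-th row-$2$ cell there is "room" of the right parity. I would handle this by an explicit inductive construction of $b_1 \le \dots \le b_m$ reading the tableau entries $1,2,\dots,2m$ in increasing order: maintain a pointer into the $b$-sequence, and each time the current entry is a row-$1$ cell, assign the next $b_j$; each time it is a row-$2$ cell, it must equal $a_2 + b_{j'}$ for the matching $j'$, and the ballot condition ensures $j' \le j$ so the needed $b_{j'}$ has already been defined and is $\le$ the current frontier. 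Choosing $a_2$ smaller than the minimum spacing used then certifies that no two assigned values collide and the total order is exactly $T$. Finally, appealing to Proposition~\ref{prop:lp_realizable} (or simply the definition), this functional witnesses that $T$ is realizable, and the degeneration argument of the second paragraph extends the conclusion to all constrained Young tableaux of shape $2 \times m$.
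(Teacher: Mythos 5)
There is a genuine gap at the crux of your argument. You correctly reduce to the strict (SYT) case and you correctly diagnose the difficulty---the system $a_1+b_j=T(1,j)$, $a_2+b_j=T(2,j)$ is overdetermined because $T(2,j)-T(1,j)$ is not constant in $j$---but your proposed fix, namely pinning $b_j$ to the rank-value of $T(1,j)$ and then taking $a_2$ \emph{smaller} than the minimum spacing, realizes the wrong tableau in general. Take $m=2$ and the SYT with first row $(1,2)$ and second row $(3,4)$. Your recipe gives $b_1=1$, $b_2=2$ and $a_2<1$, so the induced order is $b_1<a_2+b_1<b_2<a_2+b_2$, which is the tableau with rows $(1,3)$ and $(2,4)$, not the intended one; realizing the intended tableau forces $a_2>b_2-b_1$, i.e., a shift \emph{larger} than the gap. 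The same defect persists in your final ``pointer'' construction: if $a_2$ is below every gap $b_{j+1}-b_j$, then $a_2+b_j$ always lands immediately after $b_j$, so the only tableau you can produce is the one with rows $(1,3,5,\dots)$ and $(2,4,6,\dots)$. In general one needs $a_2>b_j-b_{j'}$ whenever $T(2,j')>T(1,j)$ and $a_2<b_j-b_{j'}$ whenever $T(2,j')<T(1,j)$, so no uniformly ``small'' choice of $a_2$ can work once the $b_j$ are fixed at rank values.

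The repair is to invert the roles: fix the shift once and for all (the paper takes $\bm{a}=(0,1)$) and build $b_1\leq\dots\leq b_m$ adaptively from left to right, using gaps of two different scales---a gap of size at least $1$ between $b_j$ and $b_{j+1}$ exactly when some second-row value must separate $T(1,j)$ from $T(1,j+1)$, and gaps equal to distinct infinitesimals $\epsilon^i$ when several first-row values must be packed between two consecutive second-row values. Your ballot-condition observation is still the reason such a greedy insertion never gets stuck, but as written your construction does not produce a functional realizing an arbitrary $2\times m$ tableau. Separately, your passage from the strict case to general constrained tableaux ``by taking limits'' would still need an argument that the limiting functional induces exactly the prescribed coarsening and not a strictly coarser one; the paper instead handles the ties directly inside the induction, using $\bm{a}=(0,0)$ for column ties and repeated values of $b$ for repeated columns, which is cleaner.
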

\begin{proof}
We can assume $T(1,k+1)<T(2,k+1)$ for each $k$ since if they are equal then by Equation \eqref{eq:column} then the first row is equal to the second and any constrained tableau of one row is realizable, so we can simply set $\bm{a}=(0,0)$ and use a $\bm{b}$ that realizes the first row.

We set $\bm{a}=(0,1)$ and we define $\bm{b}$ one step an the time.
Start with $b_1=0$.
Assuming $b_1,\dots,b_k$ has been chosen so that the induced order is given by the relative order of first $k$ columns of the tableau we now choose $b_{k+1}$.
Let $\epsilon$ be small enough.
\begin{itemize}
	\item If $T(1,k+1)=T(1,k)$, then by Equation \eqref{eq:row} we have $T(2,k+1)=T(2,k)$ and we set $b_{k+1}=1+b_{k}$.
	\item Else, if $T(1,k+1)=T(2,j)$ for some $j\leq k$, then we set $b_{k+1}=1+b_j$.
	\item Else $T(1,k+1)$ does not appear in the first $k$ columns. Let $T(i,j)$ be the rightmost entry in the second row such that $T(i,j)<T(1,k+1)$.
	We set $b_{k+1}=1+b_j+\epsilon^i$, where $i$ is the smallest power of $\epsilon$ we have not used until this point.\qedhere
\end{itemize}
\end{proof}
It follows that the number of sweeps is equal to the number of SYT of shape $(2,n)$, which is equal to the $n$th Catalan number.
\begin{corollary}
The sweep polytope $\pol[L](\Delta_1\times\Delta_{f})$ has $C_{f+1}$ orbits of vertices.
\end{corollary}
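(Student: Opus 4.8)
The plan is to identify orbits of vertices of $\pol[L](\Delta_1\times\Delta_f)$ with standard Young tableaux of rectangular shape $2\times(f+1)$, and then invoke the classical count of the latter. Throughout let $G:=\mathfrak{S}_2\times\mathfrak{S}_{f+1}$ be the group permuting the vertices within each factor; it fixes $\Delta_1\times\Delta_f$ and hence acts on the sweep polytope. By Proposition~\ref{prop:main_structure}, the vertices of $\pol[L](\Delta_1\times\Delta_f)$ are exactly the occupation vectors $\mathbf{o}_{\bw}(\ell)$ of the maximal lineups (sweeps) $\ell$, distinct sweeps have disjoint open normal cones and hence give distinct vertices, and the bijection $\ell\mapsto\mathbf{o}_{\bw}(\ell)$ is $G$-equivariant since $\mathbf{o}_{\bw}(g\cdot\ell)=g\cdot\mathbf{o}_{\bw}(\ell)$. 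So it suffices to count $G$-orbits of sweeps.

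First I would check that the generic part of the test cone of Section~\ref{ssec:prod_two} is a fundamental domain for $G$. A sweep of $\Delta_1\times\Delta_f$ is induced by some generic $(\bm{a},\bm{b})\in\R^2\times\R^{f+1}$, and the induced order on the vertices $(\be_i,\be_j)$ depends only on the values $a_i+b_j$; in particular it is unchanged by the translations $\bm{a}\mapsto\bm{a}-a_1\mathbf{1}$ and $\bm{b}\mapsto\bm{b}-b_1\mathbf{1}$. Since a generic functional has pairwise distinct coordinates within each factor, there is a unique element of its $G$-orbit (after these translations) with $0=a_1<a_2$ and $0=b_1<b_2<\dots<b_{f+1}$. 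Hence every $G$-orbit of sweeps has a unique representative lying in the test cone.

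Next I would use the tableau of relative orders from Section~\ref{ssec:prod_two}: to a generic functional in the test cone it assigns a tableau of shape $2\times(f+1)$ which, having no ties, is strictly increasing along rows and columns, i.e.\ a standard Young tableau. This assignment is injective because the tableau records the total order of $(\bm{a},\bm{b})$ on the vertices, which determines the sweep. Surjectivity is precisely the content of the Proposition proved just above: every constrained Young tableau of size $2\times(f+1)$ is realizable, in particular every SYT of that shape arises from a sweep. Therefore $G$-orbits of sweeps, and hence orbits of vertices of $\pol[L](\Delta_1\times\Delta_f)$, are in bijection with SYT of shape $2\times(f+1)$.

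It remains to recall that the number of standard Young tableaux of the rectangular shape $2\times n$ is the Catalan number $C_n$: by the hook-length formula the product of the hook lengths of the $2\times n$ rectangle is $(n+1)!\,n!$, so the number of SYT is $\tfrac{(2n)!}{(n+1)!\,n!}=\tfrac{1}{n+1}\binom{2n}{n}=C_n$; taking $n=f+1$ gives $C_{f+1}$. The only genuinely new work is the fundamental-domain bookkeeping in the first two steps — realizability, which would normally be the crux, is handed to us by the preceding Proposition — so I expect no serious obstacle, only the need to handle the translation freedom and genericity with care.
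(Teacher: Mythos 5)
Your proposal is correct and follows essentially the same route as the paper, which deduces the corollary directly from the preceding realizability proposition by identifying orbit representatives of sweeps (those induced by generic functionals in the test cone) with standard Young tableaux of shape $2\times(f+1)$ and invoking the Catalan count. The only difference is that you spell out the fundamental-domain and equivariance bookkeeping that the paper leaves implicit; this is done correctly (in particular, stabilizers of sweeps are trivial since the group acts faithfully on the vertices and a sweep orders all of them).
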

Furthermore we can also describe the set of facets.
The following proposition is mentioned without proof in \cite[Example 4.2.1]{klyachko_quantum_2004}.
\begin{proposition}\label{prop:linearSYT}
The rays of the normal fan of $\pol[L](\Delta_1\times\Delta_f)$ inside the test cone are
\begin{equation*}
\begin{array}{cl}
\left(0,0~;~\underbrace{0,\dots,0}_{k},\underbrace{1,\dots,1}_{f+1-k}\right),& \text{ for all }k\in{[f]},\text{ and }\\ \left(0,1~;~\underbrace{0,\dots,0}_{p_1},\underbrace{1,\dots,1}_{p_2},\dots,\underbrace{k-1,\dots,k-1}_{p_k}\right),&\text{ for all partitions }p\vdash (f+1).
\end{array}	
\end{equation*}
\end{proposition}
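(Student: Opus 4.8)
The plan is to compute the rays of the normal fan of $\pol[L](\Delta_1\times\Delta_f)$ inside the test cone by using Proposition~\ref{prop:main_structure}(3): a vector $\bm{y}=(\bm{a},\bm{b})$ in the test cone lies in a normal cone whose closure is a ray exactly when the induced ranking is geometrically uncoarsenable, and its relative-order tableau is a $2\times(f+1)$ constrained Young tableau (which is realizable by the preceding proposition). So I would classify the uncoarsenable constrained Young tableaux of shape $2\times(f+1)$, since each facet of the sweep polytope corresponds to one such tableau and its supporting ray is determined by the (essentially unique) linear functional realizing it. The facet-defining rankings are those ordered set partitions of the $2(f+1)$ cells that cannot be coarsened; by Proposition~\ref{prop:lp_uncoarsenable} equivalently those whose realizing cone is one-dimensional modulo the lineality space (which here is spanned by the all-ones vectors on each factor and the ``diagonal'' translations). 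I would separate into two cases according to whether the two entries $a_1,a_2$ of $\bm{a}$ are tied.

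\textbf{Case $a_1=a_2$.} Here the first and second rows of the tableau coincide (by Equation~\eqref{eq:column}), so the ranking only sees $\bm{b}$, and we are reduced to the permutohedron case of Example~\ref{ex:d=0} applied to $\Delta_f$: the uncoarsenable rankings of the $b_j$ are the ``threshold'' ones, giving rays $(0,0\,;\,0,\dots,0,1,\dots,1)$ with $k$ zeros, $k\in[f]$. I would argue uncoarsenability by noting that any such ranking has exactly two blocks, hence cannot be coarsened further without becoming trivial; and that distinct $k$ give distinct rays.

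\textbf{Case $a_1<a_2$.} Normalize $\bm{a}=(0,1)$. Now the values are $b_j$ (row~1) and $1+b_j$ (row~2), all lying in $[0,1+b_{f+1}]$; the ranking is an interleaving of the two translated copies of the multiset $\{b_j\}$. For the ranking to be uncoarsenable I claim the ordering must be determined only by the ``level'' $b_j$: i.e. the blocks are indexed by the distinct values among the $b_j$, and within the ranking row~1's copy of value $v$ comes immediately before row~2's copy of the next value — more precisely, the only freedom is the partition $p\vdash(f+1)$ recording the multiplicities $p_1,\dots,p_k$ of the distinct $b$-values, and the realizing functional is $\bm{b}=(0,\dots,0,1,\dots,1,\dots,k-1,\dots,k-1)$ with block sizes $p_1,\dots,p_k$. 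I would show: (i) such a functional indeed produces an uncoarsenable ranking — any coarsening would have to merge two consecutive blocks, but these blocks alternate ``row-2-of-level-$i$'' with ``row-1-of-level-$(i+1)$'', and merging them forces $b_{i+1}=b_i$, i.e. merges two $p$-blocks, producing a strictly coarser partition functional, showing via Proposition~\ref{prop:lp_uncoarsenable} that no coarsening fixing the remaining gaps exists; (ii) conversely any uncoarsenable ranking with $a_1<a_2$ arises this way — if some $b_j$ were not of integer-spaced form, or if the interleaving were not the canonical one, I could perturb to coarsen a block, contradiction; (iii) different partitions $p$ give non-proportional vectors, and none of these coincide with the Case~1 rays since here $a_1\neq a_2$.

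\textbf{Main obstacle.} The delicate point is part (ii) of Case~$a_1<a_2$: showing that \emph{every} facet ranking is forced into the claimed canonical interleaving, i.e. that one cannot have an uncoarsenable ranking in which, say, $b_1<b_2$ strictly but the row-2 copy of $b_1$ and the row-1 copy of $b_2$ are separated by other cells in a non-canonical way. I expect to handle this by a direct coarsening argument: given any constrained tableau of shape $2\times(f+1)$ with $a_1<a_2$ that is not of the canonical partition type, exhibit an explicit merge of two consecutive blocks realized by an explicit functional (degenerating one $\epsilon$-parameter in the realizability construction of the previous proposition), then invoke Proposition~\ref{prop:lp_uncoarsenable}. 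A clean way to organize this is to observe that the number of blocks of the ranking, minus~1, equals the dimension of the quotient of the test cone by the lineality space restricted to the relevant coordinates, so uncoarsenable $\Leftrightarrow$ the realizing cone is a ray $\Leftrightarrow$ the data $(\bm{a},\bm{b})$ is rigid up to scaling and lineality; then the two lists in the statement are exactly the two rigid configurations. Finally I would double-check the count: the second family has $p(f+1)$ elements (number of partitions of $f+1$) and the first has $f$, matching the known facet structure.
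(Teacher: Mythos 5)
Your proposal follows essentially the same route as the paper: split on whether the two $\Delta_1$-coordinates are tied, reduce the tied case to the permutohedron of $\Delta_f$ (Example~\ref{ex:d=0}), and in the untied case show that the only uncoarsenable constrained tableaux are the staircase $S(n)$ and its column-duplications, indexed by partitions $p\vdash(f+1)$. The one step you flag as the main obstacle---forcing every facet ranking into the canonical interleaving---is settled in the paper by a single concrete observation that your sketch gestures at but does not pin down: after discarding equal rows and repeated columns, each entry appears at most twice in the tableau, and if an entry $k>1$ appears only once and only in the first row, then blocks $k$ and $k-1$ can be merged; hence in an uncoarsenable tableau every first-row entry except $1$ must recur in the second row, which forces the staircase shape up to duplicating columns.
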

\begin{proof}
We can assume that $b_1>0$, otherwise we are in the situation with one row and the first set of vectors arise from Example \ref{ex:d=0}.
	
The rays in the test cone correspond to the uncoarsenable rankings which we now describe.
Let $T$ be the induced tableau.
As mentioned above we can assume that its rows are distinct.
Even more, we can assume that it does not have two equal columns, as this case can be reduced to one with a smaller $f$.
So each entry appears at most twice in $T$.

Suppose the number $k$ appears only once and in the first row.
By the construction of Proposition \ref{prop:linearSYT}, we get a coarsening of $T$ by merging the blocks $k$ and $k-1$.
It follows that in an uncoarsenable ranking represented by $T$ every element of the first row, except for $1$ appears also in the second row and this implies that $T$ is equal to
\[
S(n)=\begin{array}{|c|c|c|c|c|}
	\hline
	1&2&3&\dots&n\\
	\hline
	2&3&4&\dots&n+1\\
	\hline
\end{array}
\]
In conclusion, every uncoarsenable tableau with different rows is equal to $S(n)$ and every tableau obtained from $S(n)$ by duplicating columns.
\end{proof}

\begin{example}
The list of uncoarsenable tableaux of size $2\times 3$ is
\[
\begin{array}{|c|c|c|}
	\hline
	1&2&3\\
	\hline
	2&3&4\\
	\hline
\end{array}, \quad
\begin{array}{|c|c|c|}
	\hline
	1&1&2\\
	\hline
	2&2&3\\
	\hline
\end{array},\quad 
\begin{array}{|c|c|c|}
	\hline
	1&2&2\\
	\hline
	2&3&3\\
	\hline
\end{array}, \quad 
\begin{array}{|c|c|c|}
\hline
1&1&2\\
\hline
1&1&2\\
\hline
\end{array}, \quad 
\begin{array}{|c|c|c|}
\hline
1&2&2\\
\hline
1&2&2\\
\hline
\end{array}.
\]
\end{example}

\noindent
\textbf{The case $e\geq 2$}.
The situation is more complicated as not every tableau is realizable.
Here we have the situation for $e=2$ and different values of $f$.
\[
\begin{array}{r|c|c|c|c|c|c|c|c}
\text{Number of columns}&1&2&3&4&5&6&7&8 \\
\hline
\text{Realizable SYT}&1&5&36&295&2583&23580&221680&2130493\\
\hline
\text{Total SYT}&1&5&42&462&6006&87516&1385670&23371634 
\end{array}
\]
As proved in \cite[Corollary 1.4]{araujo_realizable_2023}, the ratio between realizable tableaux and all tableaux tend to $0$ as the number of column increases.
In the following table, we record the total number of realizable standard tableaux of size $m\times n$ that we computed using Algorithm~\ref{algo:test}.
The bold entries are new contributions to the OEIS sequence \cite[A211400]{oeis}.

\begin{center}
\resizebox{\textwidth}{!}{$
\begin{tabular}{ c c|r r r r r r r r r r}
       & $ m $ &       &       &       &       &       &         &        &                     &  \\
 $n $  &       & 1     & 2     & 3     & 4     & 5     & 6       & 7      & 8                   & 9 & 10\\\hline
       & 1     & 1     & 1     & 1     & 1     & 1     & 1       & 1      & 1                   & 1 & 1\\
       & 2     &       & 2     & 5     & 14    & 42    & 132     & 429    & 1430 & 4862 & 16796 \\
       & 3     &       &       & 36    & 295   & 2 583  & 23 580   & 221 680 & {\bf\color{red}2 130 493}& {\bf\color{red}20 829 605} & {\bf\color{red}206 452 585}\\
       & 4     &       &      &    & 6 660  & {\bf\color{red}152 933} & {\bf\color{red}3 533 808} & {\bf\color{red} 81 937 118}       &                     &  \\
       & 5     &       &       &   &       & {\bf\color{red}8 499 376}      & {\bf\color{red} 449 879 088}        &        &                     &  \\
\end{tabular}$}
\end{center}
For $n=2$, we have the Catalan numbers which grow as $O(4^n/n^{3/2})$.
For $n=3$, the least-square log-fit provides a rate of growth of $\approx\!\!9.2808$.
For $n=4$, the least-square log-fit provides a rate of growth of $\approx\!\!23.0874$.
For $(3,10)$, the computations took $\approx\!\!93$cpudays split on a basic parallel map-reduce procedure in Sagemath to only count the number of lineups.
For $(4,7)$, the computations took $\approx\!\!31$cpudays.
For $(5,6)$, the computations took $\approx\!\!199$cpudays.

\subsubsection{Product of three simplices}

We computed the number of lineups for the following cases:

\begin{center}
\begin{tabular}{c|r}
Sizes & Number of lineup orbits \\\hline
$2\times2\times 2$ & 12 \\
$2\times2\times 3$ & 110 \\
$2\times3\times 3$ & 3 792 \\
$3\times3\times 3$ & 566 616 \\
$3\times3\times 4$ & 80 638 740 \\
\end{tabular}
\end{center}

\noindent
The first cases being that of Example~\ref{ex:3qb}.

\section{The case $d-1=1$: hypercubes}
\label{sec:hypercubes}

In the case of the products of line segments---that is of hypercubes---the associated group of symmetries is $ \mathfrak{S}_2 \wr \mathfrak{S}_{N} $ which enjoys further properties.
This is the \defn{hyperoctahedral group} or Coxeter group of type $B$.
We can refine the test cone and the Gale order previously defined.

\begin{definition}

The \defn{(extended) Gale order} $\text{G}(N)$ is the refinement of the Boolean lattice $2^{[N]}=\bm{1}\times\dots\times\bm{1}$ given by the following relation.
Recall that an element $S\in\bm{1}\times\cdots\times\bm{1}$ corresponds to a subset $S\subseteq [N]$.
Given two subsets $S=\{s_1,\dots,s_i\}$ and $T=\{t_1,\dots,t_j\}$, with elements ordered from smallest to largest, we say that $ S\leq T$ if and only if $|S|\leq|T|$ and $s_{i-k}\leq t_{j-k},$ for all $k\in\{0,\dots,i-1\}$, see Figure~\ref{fig:gale} for an illustration with $n=4$.
\end{definition}

\begin{remark}\label{rem:stan}
The poset $\text{G}(N)$ is the poset $M(n)$ of minimal coset representatives of the subgroup of usual permutations $\mathfrak{S}_n$ within the Coxeter group of signed permutations $B_n$, see \cite[Figure~6]{stanley_weyl_1980}.
When this order is restricted to subsets of a fixed cardinality $k$, one recovers the traditional Gale poset on $k$-subsets of~$[n]$, see Figure~\ref{fig:gale} for an example with $n=4$ and $k=2$.
\end{remark}

We also define a refinement of the test cone (see Equation \eqref{eq:testCone}) by ordering the gaps between the vectors on each factor.
\begin{equation*}
	\mathsf{F}_{N,2} := \left\{ \bm{x} \in \mathbb{R}^{2 \times  N } ~\middle|~ 0 \leq x_{1,j} \leq x_{2,j} \text{ for }j\in[N] \text{ and } (x_{2,i}-x_{1,i}) \leq  (x_{2,k}-x_{1,k}) \text{ whenever }i>k\right\}.
\end{equation*}

Furthermore, following Examples \ref{ex:2qb} and \ref{ex:3qb}, we lower the dimension of the ambient space by considering the differences
\begin{equation*}
	\gamma: \mathbb{R}^{2 \times N} \to \mathbb{R}^{N}, \qquad (x_{1,j}, x_{2,j})_{j \in N} \mapsto (x_{2,j} - x_{1,j})_{j \in N} .
\end{equation*}

The map $ \gamma $ induces a linear isomorphism between the product of simplices $ \Delta_1^N \subset \mathbb{R}^{2 \times  N} $ and the cube  $ [-1,1]^{N} \subset \mathbb{R}^{N} $.
The image, under $ \gamma $, of the refined test cone $\mathsf{F}_{N,2} $ is the \defn{fundamental chamber}:

\begin{equation*}
\Phi_N = \left\{ \bm{x} \in \mathbb{R}^{N} ~:~ x_1\leq x_2 \leq \dots \leq x_N \right\}.
\end{equation*}

\begin{figure}[!htbp]
	\begin{tikzpicture}%
		[scale=1.000000,
		edge/.style={dotted, thick},
		edgefull/.style={thick},
		vertex/.style={inner sep=1pt,circle,draw=black,fill=black,thick},
		vertex2/.style={inner sep=1.5pt,circle,draw=black,fill=white,thick},
		vertex3/.style={inner sep=2pt,rectangle,draw=black,fill=white,thick},
		rotate=-135]
		
		\coordinate (1234) at (0,0);
		\coordinate (123) at (1,0);
		\coordinate (124) at (2,0);
		\coordinate (134) at (3,0);
		\coordinate (234) at (4,0);
		\coordinate (12) at (2,-1);
		\coordinate (13) at (3,-1);
		\coordinate (14) at (3,-2);
		\coordinate (23) at (4,-1);
		\coordinate (24) at (4,-2);
		\coordinate (34) at (5,-2);
		\coordinate (1) at (3,-3);
		\coordinate (2) at (4,-3);
		\coordinate (3) at (5,-3);
		\coordinate (4) at (6,-3);
		\coordinate (0) at (7,-3);

		\node[vertex,label=1234] at (1234) {};
		\node[vertex,label=234] at (123) {};
		\node[vertex,label=134] at (124) {};
		\node[vertex,label=124] at (134) {};
		\node[vertex,label=123] at (234) {};
		\node[vertex,label=34] at (12) {};
		\node[vertex,label=24] at (13) {};
		\node[vertex,label=14] at (14) {};
		\node[vertex,label=23] at (23) {};
		\node[vertex,label=13] at (24) {};
		\node[vertex,label=12] at (34) {};
		\node[vertex,label=4] at (1) {};
		\node[vertex,label=3] at (2) {};
		\node[vertex,label=2] at (3) {};
		\node[vertex,label=1] at (4) {};
		\node[vertex,label=$\varnothing$] at (0) {};

		\draw[edge] (1234)--(234);
		\draw[edgefull] (12)--(23);
		\draw[edgefull] (14)--(34);
		\draw[edgefull] (14)--(13);
		\draw[edgefull] (24)--(23);
		\draw[edge] (1)--(0);
		\draw[edge] (124)--(12);
		\draw[edge] (134)--(13);
		\draw[edge] (234)--(23);
		\draw[edge] (1)--(14);
		\draw[edge] (2)--(24);
		\draw[edge] (3)--(34);
		
	\end{tikzpicture}
	\caption{Extended Gale $\text{G}(N)$ for $N=4$ and the Gale poset on $2$-subsets is illustrated with filled edges.}
	\label{fig:gale}
\end{figure}
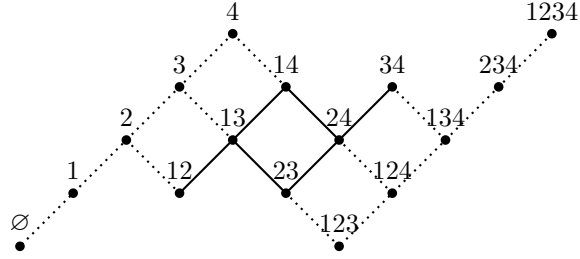

\begin{example}\label{ex:4qb}
Let $n=4$ and consider the sweep polytope of $[-1,1]^4$.
As opposed to the case $n=3$ shown in Example \ref{ex:3qb}, in this case brute force leads us nowhere, so we must rely on Algorithms~\ref{algo:test} and \ref{algo:certify}.
In this case, we obtain the following facet inequalities:

\begin{equation*}
\resizebox{\textwidth}{!}{$
\Sigma_{4,2}=\left\{\bx\in\R^4:\quad
\left(
\begin{array}{rrrr}
1 & 0 & 0 & 0 \\ 
1 & 1 & 0 & 0 \\ 
1 & 1 & 1 & 0 \\ 
1 & 1 & 1 & 1 \\ 
2 & 1 & 1 & 0 \\ 
2 & 1 & 1 & 1 \\ 
2 & 2 & 1 & 1 \\ 
3 & 1 & 1 & 1 \\ 
3 & 2 & 1 & 1 \\ 
3 & 2 & 2 & 1 \\ 
4 & 2 & 1 & 1 \\ 
4 & 3 & 2 & 1 \\ 
\end{array}
\right)\bx^{\downarrow} \leq
\left(
\begin{array}{rrrrrrrrrrrrrrrr}
1  & 1 & 1 & 1 & 1 & 1 & 1 & 1 & -1 & -1 & -1 & -1 & -1 & -1 & -1 & -1 \\ 
2  & 2 & 2 & 2 & 0 & 0 & 0 & 0 &  0 &  0 &  0 &  0 & -2 & -2 & -2 & -2 \\
3  & 3 & 1 & 1 & 1 & 1 & 1 & 1 & -1 & -1 & -1 & -1 & -1 & -1 & -3 & -3 \\
4  & 2 & 2 & 2 & 2 & 0 & 0 & 0 &  0 &  0 &  0 & -2 & -2 & -2 & -2 & -4 \\
4  & 4 & 2 & 2 & 2 & 2 & 0 & 0 &  0 &  0 & -2 & -2 & -2 & -2 & -4 & -4 \\
5  & 3 & 3 & 3 & 1 & 1 & 1 & 1 & -1 & -1 & -1 & -1 & -3 & -3 & -3 & -5 \\
6  & 4 & 4 & 2 & 2 & 2 & 0 & 0 &  0 &  0 & -2 & -2 & -2 & -4 & -4 & -6 \\
6  & 4 & 4 & 4 & 2 & 2 & 2 & 0 &  0 & -2 & -2 & -2 & -4 & -4 & -4 & -6 \\
7  & 5 & 5 & 3 & 3 & 1 & 1 & 1 & -1 & -1 & -1 & -3 & -3 & -5 & -5 & -7 \\
8  & 6 & 4 & 4 & 2 & 2 & 2 & 0 &  0 & -2 & -2 & -2 & -4 & -4 & -6 & -8 \\
8  & 6 & 6 & 4 & 4 & 2 & 2 & 0 &  0 & -2 & -2 & -4 & -4 & -6 & -6 & -8 \\
10 & 8 & 6 & 4 & 4 & 2 & 2 & 0 &  0 & -2 & -2 & -4 & -4 & -6 & -8 & -10 \\
\end{array}
\right)\bw
\right\}$.}
\end{equation*}
\end{example}

The length of a sweep of the set $ [-1,1]^N $ is $ 2^N $ and the number of sweeps, even when restricted to sweeps induced by functionals in the fundamental chamber, grows too fast for practical purposes.
However, Remark~\ref{rem:partial} shows that we can obtain some of the inequalities for the sweep polytope by considering $r$-lineup polytopes.

\begin{remark}
\label{rem:fermions}
In \cite{castillo_effective_2021} we studied the case of indistinguishable fermionic particles: mathematically we considered a single Hilbert space $ \mathcal{H} $ of dimension $d$ and took its $N$ antisymmetric power $ \bigwedge^N \mathcal{H} $.
Combinatorially, this led us to study the sweep polytope of the \textit{hypersimplex} $ \Delta(d,N) $.
For the case of $N$ distinguishable qbits (so $d=2$) the analogous polytope is the $N$-dimensional cube.
\end{remark}

Whereas in \cite{castillo_effective_2021}, we considered $d$ as a variable that was meant to grow to infinity, here for most applications a fixed $d$ suffices.
For practical purposes, what we need is to analyze what happens when $  N $ grows.

\begin{proposition}\label{prop:lift_N}
Let $r\geq2$ and $N\geq r-1$.
Suppose we have the $H$-representation  
\[
\pol[L]_{r}([-1,1]^N) =\{\bx\in\mathbb{R}^N~:~\np{\by,\bx^\downarrow}\leq b, (\by,b)\in I\},
\]
where $I\subseteq \Phi_N\times\mathbb{R}$.
For $M>N$ we have the $H$-representation
\[
\pol[L]_{r}([-1,1]^M) =\{\bx\in\mathbb{R}^M~:~\np{\by',\bx^\downarrow}\leq b', (\by,b)\in I\},
\]
where $\by'$ is obtained from $\by$ by appending $M-N$ entries equal to $y_1$ at the beginning and $b'=b+(M-N)y_1$.
With the exception that if $\by=(1,0,\dots,0)$ then $\by'=(1,0,0,\dots,0)$.
\end{proposition}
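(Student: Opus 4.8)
The plan is to exploit the recursive description of $r$-lineups of $[-1,1]^M$ in terms of saturated chains in the extended Gale order $\text{G}(M)$, via Proposition~\ref{prop:orderIdeals} and Algorithm~\ref{algo:test}, together with the fact (Proposition~\ref{prop:main_structure}, part~(3)) that the support function of a lineup polytope in a direction $\bm{y}$ depends only on the largest $r$ values of $\np{\bm{y},\cdot}$ on the vertex set. First I would restrict attention to functionals in the fundamental chamber $\Phi_M$, which suffices by the symmetry of $[-1,1]^M$ under $\mathfrak{S}_2\wr\mathfrak{S}_M$; every facet inequality has a representative with normal vector in $\Phi_M$ after applying the $\bm{x}\mapsto\bm{x}^\downarrow$ reduction. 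The key observation is that for $N\geq r-1$, any $r$-lineup of the cube already ``uses up'' at most $r$ of the coordinate directions: since a saturated chain of length $r$ in $\text{G}(M)$ starting from the top can only move among subsets whose symmetric differences involve at most $r-1$ distinct coordinates, the combinatorics of $r$-lineups stabilizes once $M\geq r-1$. Concretely, the extra $M-N$ coordinates at the small end (indices $1,\dots,M-N$ after the shift) are never ``separated'' by any ranking realizing an $r$-lineup: the corresponding vertices of the cube all get bundled into the tail block $S_k$ of every $r$-ranking.

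Next I would make the correspondence between normal cones precise. Given a facet of $\pol[L]_r([-1,1]^N)$ with normal $\bm{y}\in\Phi_N$, its normal cone is cut out by the inequalities of Proposition~\ref{prop:main_structure}(2), equivalently by Algorithm~\ref{algo:test}'s Equation~\eqref{eq:r+1}. I would check that prepending $M-N$ copies of $y_1$ to $\bm{y}$ (the smallest entry) yields a vector $\bm{y}'\in\Phi_M$ whose induced $r$-ranking on the vertices of $[-1,1]^M$ is the ``same'' ranking, in the sense that the $r$ top blocks are supported on the last $N$ coordinates exactly as before, and the new coordinates only add vertices to the tail. This uses that $y_1\geq 0$ is the minimum, so switching signs in the first $M-N$ coordinates can only decrease (or keep equal) the functional value — hence these never enter the top $r$. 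The value of the support function then changes by exactly the contribution of always taking $+1$ in the new coordinates of the optimal occupation vector, giving $b' = b + (M-N)y_1$. The degenerate case $\bm{y}=(1,0,\dots,0)$ must be singled out because there $y_1$ is the \emph{largest} entry, not repeated at the small end; prepending copies of $1$ would change the ranking, so instead one appends a single $0$ to keep $\bm{y}'=(1,0,\dots,0)\in\Phi_M$, which corresponds to the ``one coordinate dominates'' facet that persists unchanged.

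Finally I would argue that the lifted list is complete and irredundant: completeness because every $r$-ranking of $[-1,1]^M$ that is uncoarsenable must, by the stabilization argument, have its top $r$ blocks confined to some $N$-element subset of coordinates after reindexing, hence arises as a lift of an uncoarsenable $r$-ranking of $[-1,1]^N$; irredundancy because the map $\bm{y}\mapsto\bm{y}'$ is injective and preserves the facet-defining property (one can certify this with Proposition~\ref{prop:lp_uncoarsenable} applied to the lifted ranking, the linear program for $[-1,1]^M$ being the obvious extension of the one for $[-1,1]^N$ with the extra coordinates contributing only equalities among the tail vertices). I expect the main obstacle to be the bookkeeping in the stabilization step: proving rigorously that no uncoarsenable $r$-ranking of the larger cube can ``spread'' its separating structure across more than $r-1$ nontrivial coordinate directions, and that the remaining coordinates genuinely behave as passive — this requires a careful analysis of saturated chains in $\text{G}(M)$ and of which coarsenings Proposition~\ref{prop:lp_uncoarsenable} can detect, rather than any hard computation.
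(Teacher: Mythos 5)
Your skeleton coincides with the paper's: restrict to the fundamental chamber by symmetry, argue that the combinatorics of $r$-lineups of the cube stabilizes once $N\geq r-1$ because only $r-1$ coordinate directions are ever ``active'', lift each normal cone by duplicating one entry of its rays, and compute $b'$ from Proposition~\ref{prop:main_structure}(3). The paper packages the stabilization as an isomorphism between the size-$r$ upper order ideals of $\text{G}(N)$ and of $\text{G}(M)$ (the new index belongs to \emph{every} member of the ideal, i.e.\ the new coordinate is never flipped among the top $r$ vertices), after which Algorithm~\ref{algo:test} performs literally the same comparisons with one repeated coordinate.

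The gap is in your justification of the central step, and as you state it the argument would fail. You take $y_1$ to be the \emph{minimum} entry of $\by$ and claim the new coordinates are passive because ``switching signs in the first $M-N$ coordinates can only decrease the functional value --- hence these never enter the top $r$.'' This is a non sequitur: for $\by'$ with non-negative entries \emph{every} sign switch weakly decreases $\np{\by',\cdot}$, and the top $r$ vertices of $[-1,1]^M$ are precisely those reached by the $r$ cheapest sign switches, so ``decreases the value'' cannot by itself exclude a vertex from the top $r$. Worse, if the duplicated weight really were the minimum, flipping a new coordinate would be the \emph{cheapest} move available, the resulting vertices would occupy positions $2,\dots,M-N+1$ of every lineup, and the correspondence would be destroyed. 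The convention forced by the pairing with $\bx^\downarrow$ and by the paper's own example $(2,2,1,1)\mapsto(2,2,2,2,2,1,1)$ is that $y_1$ is the \emph{largest} entry: the new coordinates are the most expensive to flip. That they then never appear flipped among the top $r$ is exactly the ideal-structure lemma you only gesture at: if a flip-set $S$ is among the $r$ cheapest for some $\by'$ in the chamber, then so are $\varnothing$ and every singleton $\{j\}$ with $y'_j\leq\max_{i\in S}y'_i$; counting these forces $S$ to touch only the $r-1$ coordinates of smallest weight. This is the only place $N\geq r-1$ is used, and it is what makes the lifted ranking literally the same ranking padded by passive coordinates, so that the normal-cone rays lift by repeating the largest entry and the support function shifts by $(M-N)y_1$. (Also, in the exceptional case one appends $M-N$ zeros to $(1,0,\dots,0)$, not a single one.)
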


\begin{proof}
The set of upper ideals of length $ r $ of the poset $\text{G}(N)$ is isomorphic to that of $\text{G}(M)$ as long as $ N \geq 1 $.
The isomorphism is simply adding a 0 to every set and adding one to all elements.
When running Algorithm~\ref{algo:test} then we get the same steps and comparisons, with the only difference that the corresponding vectors have one more entry at the start, hence the replication of the first entry $ y_{1} $.
To obtain the new right hand side $ b' $ we apply the last part of Proposition \ref{prop:main_structure}.
\end{proof}

For example, consider the following inequality for $N= 4$ which is minimal for $r=4$,
\begin{equation*}
	(2, 2, 1, 1)\bx^{\downarrow} \leq (6,0,0,-2)\bw,
\end{equation*}
where $\bw=(w_1,w_2,w_3,w_4)$.
It is transformed into the following inequality which is valid for $N=7$ particles and $r=4$:
\begin{equation*}
	(\mathbf{2},\mathbf{2},\mathbf{2}, 2, 2, 1, 1)\bx^{\downarrow} \leq (6,0,0,-2)\bw + 2(7-4).
\end{equation*}

\begin{remark}\label{rem:white_whale}
The sweep polytope of $[-1,1]^n$ is normally equivalent to the sweep polytope of $[0,1]^n$.
This is a zonotope obtained by Minkowski adding the vectors $
\sum_{\bv_1,\bv_2\in\text{Vert}([0,1]^n)} \left(\bv_1-\bv_2\right).$
The \emph{resonance arrangement} is the hyperplane arrangement associated to the zonotope given by the Minkowski sum $\sum_{\bv\in\text{Vert}([0,1]^n)} \left(\mathbf{0}, \bv\right).$ Dubbed the \emph{White Whale} by Billera, some recent research have focused on the computations of its vertices, see \cite{deza_sizing_2022} and \cite{brysiewicz_enumerating_2021}.
Since $ \mathbf{0} $ is a vertex of $ [0,1]^N $, the White Whale is a Minkowski summand of the sweep polytope of the cube.
In other words, there exists a polytope $ Q $ such that
\begin{equation}\label{eq:white_whale}
 Q + \sum_{\bv\in\text{Vert}([0,1]^n)} \left(\mathbf{0}, \bv\right)= 
\sum_{\bv_1,\bv_2\in\text{Vert}([0,1]^n)} \left(\bv_1-\bv_2\right).
\end{equation}
With a mix of clever ideas and lots of computation power, the total number of vertices of the white whale is known only until $N=9$, see \cite{deza_sizing_2022}.
Equation \eqref{eq:white_whale} puts some context the hardness of computing the sweep polytope of the cube.
\end{remark}

\section{Further questions}
\label{sec:further}
\subsection{Relationship with the quantum marginal polytope}
Even though the polytope $\Lambda(d,N,\bm{w})$ of Equation~\eqref{eq:klypo} is not a sweep polytope, it contains some of the occupation vectors coming from lineups, namely the ones in the positive orthant.
We compare $\Lambda(d,N,\bm{w})$ and $\pol[L]_{r,\bm{w}}(\Pi_{d,N})$ in the following small example.

\begin{example}
Continuing Example \ref{ex:2qb}, without loss of generality, we may restrict the study of $\Sigma_{2,2} $ to the positive quadrant.
In Example \ref{ex:2qb} we computed $ \Sigma_{2,2,\bm{w}} $ the sweep polytope of the product of $ N = 2 $ simplices of dimension $ d-1 $ with $ d = 2 $.
The inequalities obtained by Bravyi \cite{bravyi_requirements_2003} for the polytope $ \Lambda(2,2,\bm{w}) $ are as follows:
\[
\begin{array}{rcl}
0 \leq&x_1\phantom{+x_2}&\leq w_1+w_2-w_3-w_4,\\
0 \leq&\phantom{x_1+}\:\:x_2&\leq w_1+w_2-w_3-w_4,\\
0 \leq&x_1+x_2&\leq2w_1-2w_4,\\
-2\min\{w_1-w_3,w_2-w_4\}\leq&x_1-x_2&\leq2\min\{w_1-w_3,w_2-w_4\}.
\end{array}
\]
The last two inequalities are the only ones that are not obtained by restricting $ \Sigma_{2,2,\bm{w}} $ to the positive quadrant.
The comparison between $ \Sigma_{2,2,\bm{w}} $ restricted to the positive orthant and $ \Lambda(2,2,\bm{w}) $ is illustrated in Figure~\ref{fig:2qubits_bravi}, where $\mu:=2\min\{w_1-w_3,w_2-w_4\}$.

\begin{figure}[!htbp]
\begin{tikzpicture}
[scale=4,
vertex/.style={inner sep=1.5pt,circle,draw=black,fill=purple,thick},
vertex_carre/.style={inner sep=2pt,circle,draw=black,fill=red,thick},
axis/.style={loosely dotted, thick},
poly/.style={draw=black,thick,fill=purple,fill opacity=0.2},
bravi/.style={draw=black,thick,fill=red,fill opacity=0.4},
baseline=(center)]

\def\wun{0.63}
\def\wdeux{49/200}
\def\wtrois{0.12}
\def\wquatre{0.005}
\def\m{{2*min(\wun-\wtrois,\wdeux-\wquatre)}}
\def\undeux{\wun+\wdeux-\wtrois-\wquatre}
\def\untrois{\wun-\wdeux+\wtrois-\wquatre}
\def\a{{\undeux-2*min(\wun-\wtrois,\wdeux-\wquatre)}}

\coordinate (1312) at (\untrois,\undeux) {};
\coordinate (1213) at (\undeux,\untrois) {};
\coordinate (mx) at (\m,0) {};
\coordinate (my) at (0,\m) {};
\coordinate (ma) at (\undeux,\a) {};
\coordinate (mb) at (\a,\undeux) {};
\coordinate (center) at (0.5,0.5) {};

\node[inner sep=0.5pt,circle,draw=black,fill=black,label=below left:{$(0,0)$}] at (0,0) {};
\draw[->] (0,0) -- (1.25,0);
\draw[->] (0,0) -- (0,1.25);
\draw[axis] (1,0) -- (1,1) -- (0,1) {};
\node[vertex_carre,label=above right:{$(1234,1234)$}] at (1,1) {};

\filldraw[poly] (0,0) -- (\undeux,0) -- (1213) -- (1312) -- (0,\undeux) -- cycle;

\filldraw[bravi] (0,0) -- (mx) -- (ma) -- (1213) --(1312) -- (mb) -- (my) -- (0,0) -- cycle;

\node[vertex_carre,fill=blue] at (mx) {};
\node[vertex_carre,fill=blue] at (my) {};
\node[vertex] at (1312) {};
\node[vertex] at (1213) {};
\node[vertex] at (ma) {};
\node[vertex] at (mb) {};

\node[label=below:{$(12,0)$}] at (\undeux,0) {};
\node[label=left:{$(0,12)$}] at (0,\undeux) {};
\node[label=below:{$(\mu,0)$}] at (\m,0) {};
\node[label=left:{$(0,\mu)$}] at (0,\m) {};
\node[label=above:{$(13,12)$}] at (\untrois,\undeux) {};
\node[label=right:{$(12,13)$}] at (\undeux,\untrois) {};
\node[label=below:{$(1234,0)$}] at (1.1,0) {};
\node[label=left:{$(0,1234)$}] at (0,1) {};

\coordinate (1412) at (\wun-\wdeux-\wtrois+\wquatre,\wun+\wdeux-\wtrois-\wquatre) {};
\coordinate (1214) at (\wun+\wdeux-\wtrois-\wquatre,\wun-\wdeux-\wtrois+\wquatre) {};
\coordinate (1413) at (\wun-\wdeux-\wtrois+\wquatre,\wun-\wdeux+\wtrois-\wquatre) {};
\coordinate (1314) at (\wun-\wdeux+\wtrois-\wquatre,\wun-\wdeux-\wtrois+\wquatre) {};
\node[vertex] at (1412) {};
\node[vertex] at (1214) {};
\node[vertex] at (1413) {};
\node[vertex] at (1314) {};

\end{tikzpicture}
\caption{The 6 points illustrated from Figure \ref{fig:2qubits} in the positive orthant. We highlight that the two extra inequalities attain equality on some of these points.}
\label{fig:2qubits_bravi}
\end{figure}
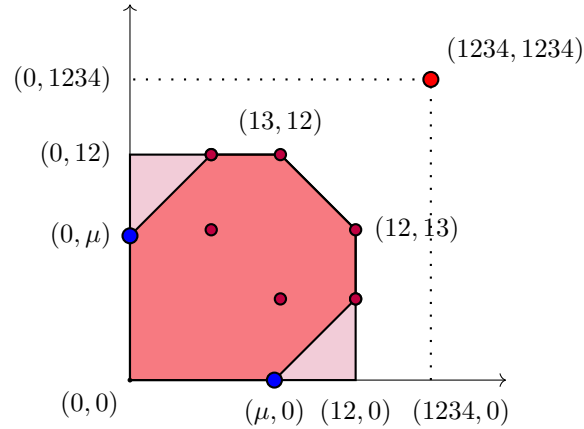

In Figure \ref{fig:2qubits}, we assumed that $w_1-w_2-w_3+w_4$ is larger than its negative $-w_1+w_2+w_3-w_4$.
This is equivalent to $ w_1 - w_3 > w_2 - w_4 $, in which case the maximum of $ x_1 - x_2 $ among the positive occupation vectors is $ (w_1+w_2-w_3-w_4) - (w_1-w_2-w_3+w_4) = 2(w_2-w_4)$. 
Otherwise, if $ w_1 - w_3 < w_2 - w_4 $ then the positive occupation vector include $ (12,23) $ and $ (13,23) $ instead of $ (12,14) $ and $ (13,14) $ in the terminology of Figure \ref{fig:2qubits}.
In this case the maximum of $ x_1 - x_2 $ among the positive occupation vectors is $ 2(w_1-w_3) $.
In either case, the maximum value of $x_1 - x_2$ over the positive occupation vectors matches the maximum over the whole polygon $ \Lambda(2,2,\bm{w}) $!
Similarly, the same holds for the minimum.

\begin{question}
Does every facet defining inequality of the polytope $ \Lambda(d,N,\bm{w}) $ achieve its maximum over positive occupation vectors of the product of $ N $ simplices $ \Delta_{d-1} $?
This would provide a closer relationship between $ \pol[L](\Delta_{d-1}^N) $ and $ \Lambda(d,N,\bm{w}) $.
\end{question}

\end{example}

\subsection{Cyclic polytopes}

Let $ S = \{a_{1}, \dots, a_{n}\} \subset \mathbb{R} $ be a fixed subset of real numbers.
The cyclic polytope $ \pol[C]_d(S) $ is the convex hull of the set $\mathbf{C}_n:=\left\{ (1, a_{i}, a_{i}^{2}, \dots, a_{i}^{d-1}) : i \in [n]\right\} \subset \mathbb{R}^{d}$.
The vertices of the cyclic polytope are naturally labelled by the set $S$.
Furthermore its face lattice and oriented matroid depend only on $n=|S|$ and not on the set $S$.
A lineup of length $ n $, i.e. a sweep, of the cyclic polytope consists of a total order of $S$ coming from a linear functional $ \bm{p} = (p_{0}, \dots, p_{d-1} ) \in \mathbb{R}^d $.
The value of the dot product in a vertex of the cyclic polytope is $(p_{0}, p_1, p_2, \dots, p_{d-1} ) \cdot (1, a_{i}, a_{i}^{2}, \dots, a_{i}^{d-1})  = p_0 + p_1a_i + \dots p_{d-1}a_i^{d-1}$.
In other words, we consider the polynomial $ p(X) = p_0 + p_1X + \dots p_{d-1}X^{d-1} \in \mathbb{R}[X] $ and order the elements of~$S$ according to the values $p(S)$.
Since the vector $ \bm{p} $ is chosen arbitrarily, finding all lineups of $\mathbf{C}_n$ is equivalent to finding all possible orderings of $n$ points induced by a \emph{polynomial} of degree at most $d-1$.

\begin{question}
Sweep polytopes depend only on the oriented matroid of the considered point configuration and the oriented matroid of $\mathbf{C}_n$ depends only on $n$.
What is the number $N(d,n)$ of sweeps of the $d$-dimensional cyclic polytope with $n$-vertices? Is there a closed formula when $ n \gg d $?
\end{question}

\newcommand{\etalchar}[1]{$^{#1}$}
\providecommand{\bysame}{\leavevmode\hbox to3em{\hrulefill}\thinspace}
\providecommand{\MR}{\relax\ifhmode\unskip\space\fi MR }
\providecommand{\MRhref}[2]{%
  \href{http://www.ams.org/mathscinet-getitem?mr=#1}{#2}
}
\providecommand{\href}[2]{#2}

\end{document}